\newtheorem{prpn}{Proposition}
\newtheorem{thm}{Theorem}
\newtheorem*{thm*}{Theorem}
\newtheorem{lemma}{Lemma}
\newtheorem{cor}{Corollary}
\theoremstyle{definition}
\theoremstyle{remark}
\newtheorem*{nb}{Note}
\newcommand{\onehalf}{\tfrac{1}{2}}
\newcommand{\threehalfs}{\tfrac{3}{2}}
\newcommand{\reals}{\mathbb{R}}
\title{Warped Graphs of Nonpositively Curved Gluings}
\author{Pedro Ontaneda and Ted Ofner}
\begin{document}

\maketitle

\begin{abstract}
We prove that all finite graphs of groups with cyclic edge and vertex groups act freely, properly, and isometrically on a complete, nonpositively curved geodesic metric space. 
\end{abstract}

\section{Introduction}

Baumslag-Solitar groups are well known in the world algebra as a source of examples and counterexamples. In a geometric group theory setting, they are known for their exponential Dehn functions, which forbids them from acting geometrically on CAT(0) spaces. They are also a basic example in the broader theory of graphs of groups which contains more general HNN extensions, free products with amalgamations and more exotic combinations of groups described by Serre's theory of groups acting on trees. 

Within the realm of geometry, it is of general interest to know what geometries these groups could support. In particular, we focus on nonpositively curved metric geometry i.e. complete metric spaces with metrics realized by shortest paths under the additional condition that neighborhoods are CAT(0). In this paper, we show that although a number of groups within this category do not support \textit{compact}, nonpositively curved geometry, many examples of interest including Baumslag-Solitar groups do support nonpostively curved geometry as long as we allow the space to be noncompact. 

\begin{thm}
Let $(G = (E,V), \mathcal{G})$ be a finite graph of groups such that each edge and vertex group is an infinite cyclic group. Then $\pi_1(\mathcal{G})$ acts freely, properly and isometrically on a nonpositively curved geodesic metric space.
\end{thm}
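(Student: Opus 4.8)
\section*{Proof proposal}

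The plan is to realize $\pi_1(\mathcal{G})$ as acting on a ``tree of nonpositively curved spaces'' built over the Bass--Serre tree, with the cyclic edge/vertex data forcing a warped interpolation along each edge. By Bass--Serre theory $\pi_1(\mathcal{G})$ acts on the Bass--Serre tree $T$ with all vertex and edge stabilizers infinite cyclic, and since $\mathcal{G}$ is finite, $T$ is locally finite (the action on $T$ alone being cocompact). I would thicken $T$ to a space $X$ as follows: to each vertex $\tilde v$, with stabilizer $Z_{\tilde v}\cong\mathbb{Z}$, attach a nonpositively curved ``vertex space'' $M_{\tilde v}$ carrying a free, proper, but deliberately \emph{non-cocompact} isometric $Z_{\tilde v}$-action — for instance a space built from the hyperbolic plane with $Z_{\tilde v}$ acting parabolically, whose quotient is an open cusped cylinder — the non-cocompactness being essential, since groups such as $BS(1,2)$ cannot act geometrically on any nonpositively curved space. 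For each edge $\tilde e$ of $T$ joining $\tilde v$ to $\tilde w$, insert an ``edge space'' $N_{\tilde e}$ connecting $M_{\tilde v}$ to $M_{\tilde w}$: the edge group $Z_{\tilde e}\cong\mathbb{Z}$ embeds with index $|m_e|$ in $Z_{\tilde v}$ and index $|n_e|$ in $Z_{\tilde w}$, so on the two ends of $N_{\tilde e}$ it must act with ``translation scales'' in the ratio $|n_e/m_e|$; this mismatch is absorbed by giving $N_{\tilde e}$ a \emph{warped} metric, of the shape (fiber)$\,\times_f\,$(interval), with $f$ a positive convex warping function realizing the two prescribed boundary values. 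Done $\pi_1(\mathcal{G})$-equivariantly, this produces a space $X$ with a free isometric $\pi_1(\mathcal{G})$-action.

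Next one verifies that $X$ is complete, geodesic, and nonpositively curved. Completeness and the geodesic property follow from the corresponding properties of the pieces together with local finiteness of $T$. For nonpositive curvature: the vertex spaces are nonpositively curved by construction, and each warped edge space is nonpositively curved because a warped product of nonpositively curved spaces over an interval, with positive convex warping function, is again nonpositively curved (the analytic input, following Bishop--O'Neill and its metric-space extensions by Alexander--Bishop, Chen, and others). Since $X$ is assembled from these pieces by gluing along the subspaces where edge spaces meet vertex spaces, a local Reshetnyak-type gluing theorem for nonpositively curved spaces reduces the problem to showing that each gluing locus is closed and \emph{locally convex} in both adjacent pieces.

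This convexity requirement is the heart of the matter, and where the construction must be engineered with care. In a product-warped piece (fiber)$\,\times_f [0,\ell]$ a boundary slice at an endpoint is convex precisely when $f$ attains its minimum there, so when $|m_e|\,L_{\tilde v}\neq |n_e|\,L_{\tilde w}$ — which is forced whenever an edge is a loop, already for $BS(2,3)$ — a single warped strip cannot be convexly glued onto both of its vertex spaces at once. The edge space must therefore be presented as a genuine \emph{nonpositively curved gluing} of several warped blocks — a ``warped graph'' of blocks — arranged so that both vertex-gluing loci sit at convex places while the $Z_{\tilde e}$-action on the two ends still realizes the index data $|m_e|,|n_e|$. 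I expect checking local convexity of all the gluing loci in this assembled picture, simultaneously with equivariance, to be the main obstacle, likely requiring a direct analysis of geodesics and link angles in warped products along each gluing stratum.

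Finally one checks the action is free and proper. Freeness: $\pi_1(\mathcal{G})$ is torsion-free, and a nontrivial element either acts hyperbolically on $T$, fixing no point of $X$, or fixes a vertex $\tilde v$ of $T$, hence lies in a conjugate of $Z_{\tilde v}$ and acts freely on $M_{\tilde v}$ and a neighborhood of it. Properness is the second delicate point and is exactly where the non-cocompact vertex models pay off: the action on $T$ is itself not proper, but the warping makes the relevant pieces of $X$ grow geometrically in size with distance from a basepoint in $T$, so that only finitely many group elements can carry a given compact subset of $X$ back to itself; making this quantitative — showing that the coarse geometry of $X$ detects depth in $T$ — completes the proof.
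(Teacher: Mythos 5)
Your route (an equivariant ``tree of spaces'' over the Bass--Serre tree) is genuinely different from the paper's, but as written it has two unresolved steps, and they are exactly the points the paper's machinery is built to handle. The first is the local convexity of the gluing loci when the scales at the two ends of an edge disagree. You correctly observe that in a warped strip $(\text{fiber})\times_f[0,\ell]$ an endpoint slice is convex only where $f$ is minimal, so a single strip cannot be convexly attached at both ends; but your proposed fix (``several warped blocks arranged so that both vertex-gluing loci sit at convex places'') is left as an expectation, and chaining compact blocks only relocates the offending slice to the interior interfaces, where the same non-convexity reappears. The paper's solution is structurally different: it never glues along slices transverse to the warping direction. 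It forms $C(\varphi_e)\times_\lambda\reals^E$ with warping $f_\lambda(t)=\prod_e\lambda_e^{t_e}$ over a \emph{noncompact} $\reals^E$, so that the shift $(x,t)\mapsto(x,t+\delta_e)$ is an isometry $\lambda_e X\times_\lambda\reals^E\to X\times_\lambda\reals^E$ (Lemmas 2 and 4); the scaling mismatch becomes an honest isometry, and the gluing loci are of the form $X_e\times_\lambda\reals^E$, which remain closed and locally convex because geodesics in the warped product project to geodesics in the first factor (Lemma 1, Corollary 1). Your specific vertex model also runs into trouble: with $\ints$ acting parabolically on a cusped plane, the natural invariant loci along which edge spaces would attach are horocycle-type curves, which are not locally convex, so the Reshetnyak-type gluing theorem does not apply there. (The paper instead uses flat circles $X_v=Y_e=S^1$ and the covering maps $x\mapsto k_ex$, which are nonpositively curved gluings $S^1\to\frac{1}{|k_e|}S^1$ by its Theorem 8.)

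The second gap is properness. You leave it as a quantitative coarse-geometry claim (``making this quantitative \dots completes the proof''), and it is not clear how the warping alone yields it, since the action on $T$ is not proper and the stabilizer actions on your vertex spaces must be controlled uniformly over infinitely many pieces. The paper sidesteps this entirely: it builds the \emph{quotient} object first --- a nonpositively curved geodesic space $\mathcal{Y}$ homotopy equivalent to $\Gamma(\mathcal{X})$, with $\pi_1(\mathcal{Y})=\pi_1(\mathcal{G})$ --- and then lets $\pi_1(\mathcal{G})$ act by deck transformations on the universal cover $\tilde{\mathcal{Y}}$, where freeness, properness, and isometry are automatic and nonpositive curvature lifts since it is a local condition. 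If you want to salvage your equivariant picture, the cleanest repair is essentially to reproduce the paper's construction upstairs, i.e.\ to take $\tilde{\mathcal{Y}}$ itself; otherwise you must supply both the convexity mechanism and the properness estimate, neither of which is yet in your outline.
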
 

This theorem comes as a corollary of a more general construction. The corresponding theorem relies on the rather technical definition of a \textit{nonpositively curved gluing}, but we will state it here as the main theorem of this paper.

\begin{thm}
Let $\mathcal{X}$ be a finite graph of spaces such that each $X_v$ and $Y_e$ is a nonpositively curved geodesic metric space and for each $e \in E$ there is $\lambda_e > 0$ so that $\varphi_e \colon Y_e \to \lambda_e X_{\partial(e)}$ is a nonpositively curved gluing. Then there is a nonpositively curved geodesic metric space $\mathcal{Y}$ which is homotopy equivalent to $\Gamma(\mathcal{X}).$
\end{thm}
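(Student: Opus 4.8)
The plan is to realize $\mathcal{Y}$ as a graph of spaces over the underlying graph of $\mathcal{X}$, whose vertex pieces are suitably rescaled copies of the $X_v$ and whose edge pieces are the warped (double) mapping cylinders furnished by the hypothesis, and then to force nonpositive curvature by a gluing theorem. I would first fix, for each vertex $v$, a scale $\mu_v>0$. For each unoriented edge $e$ with endpoints $u$ and $v$, the assumption that both $\varphi_e\colon Y_e\to\lambda_e X_u$ and $\varphi_{\bar e}\colon Y_e\to\lambda_{\bar e}X_v$ are nonpositively curved gluings provides (by the machinery developed above) a complete nonpositively curved geodesic metric space $N_e$ --- a warped double mapping cylinder of $\varphi_e$ and $\varphi_{\bar e}$ --- containing isometric copies of $\mu_u X_u$ and $\mu_v X_v$ as complete, locally convex subspaces (its two ``ends''), and homotopy equivalent, rel those ends, to the ordinary double mapping cylinder of $\varphi_e$ and $\varphi_{\bar e}$. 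The rescalings $\lambda_e$ together with the freedom in the warping profile along the cylinder direction are what permit the two end scales of $N_e$ to be prescribed as the chosen $\mu_u,\mu_v$ while keeping $N_e$ nonpositively curved; arranging this consistently over the whole graph is the first bookkeeping step.

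Next I would set
\[
\mathcal{Y}\;=\;\Bigl(\;\bigsqcup_{v\in V}\mu_v X_v\;\sqcup\;\bigsqcup_{e}N_e\;\Bigr)\Big/\!\sim
\]
(one $N_e$ for each unoriented edge), where each end of each $N_e$ is identified, by the given isometry, with the vertex piece $\mu_v X_v$ it is modeled on; thus several cylinders may be attached to a single vertex piece, and when $e$ is a loop at $v$ both ends of $N_e$ are attached to the same vertex piece. Give $\mathcal{Y}$ the induced length metric. Because the identifications are along complete (hence closed) subspaces and the graph is finite, $\mathcal{Y}$ is complete, and it is a length space.

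For nonpositive curvature I would check that $\mathcal{Y}$ is locally CAT(0). At a point in the interior of some $N_e$, or in a vertex piece $\mu_v X_v$ away from the attaching loci, a small ball is isometric to one in $N_e$ or in $\mu_v X_v$ and so is CAT(0) by hypothesis. At a point of a vertex piece $\mu_v X_v$, a small ball is the gluing of a small ball of $\mu_v X_v$ with the small balls of the finitely many cylinders $N_e$ attached there, all along their common small ball in $\mu_v X_v$, which is convex in each piece; by the (iterated) Reshetnyak gluing theorem a gluing of CAT(0) spaces along a common complete convex subspace is CAT(0). Since $\mathcal{Y}$ is complete and locally CAT(0), it is geodesic and nonpositively curved by the Cartan--Hadamard theorem.

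Finally, for the homotopy type: passing from $\Gamma(\mathcal{X})$ to $\mathcal{Y}$ amounts to replacing each straight mapping-cylinder tube $Y_e\times[0,1]$ by the warped double cylinder $N_e$, and by the homotopy equivalence rel ends noted above, together with the fact that the end inclusions are cofibrations, this is a homotopy equivalence of the assembled spaces, so $\mathcal{Y}\simeq\Gamma(\mathcal{X})$. The substantive input is entirely the earlier theory --- that the warped (double) mapping cylinder of nonpositively curved gluings is itself nonpositively curved with locally convex ends; granting that, the places I expect to need the most care are (i) the scale-and-warping bookkeeping that makes every cylinder $N_e$ realize the prescribed scale $\mu_v$ at its $v$-end, and (ii) confirming that each $\mu_v X_v$ really does sit as a locally convex subspace of every cylinder attached to it, so that the multi-piece and loop gluings are genuinely covered by the gluing theorem.
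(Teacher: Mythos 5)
There is a genuine gap, and it sits exactly at the point you flag as ``bookkeeping.'' The hypothesis hands you a \emph{fixed} nonpositively curved metric on each $C(\varphi_e)$ whose closed end is isometric to $\lambda_e X_{\partial(e)}$; it gives you no control over the interior, so there is no ``warping profile along the cylinder direction'' to adjust. The only global freedom is to rescale $C(\varphi_e)$ and $C(\varphi_{\bar e})$ by constants, and to glue them along their $Y_e$ ends those constants must agree; hence the double cylinder for the edge $e$ has end scales with \emph{fixed ratio} $\lambda_e/\lambda_{\bar e}$. Your scheme therefore needs scales $\mu_v$ with $\mu_{\partial(e)}/\mu_{\partial(\bar e)}=\lambda_e/\lambda_{\bar e}$ for every edge, i.e.\ the product of these ratios around every cycle of the graph must equal $1$. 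This holonomy condition fails in general, and it fails precisely in the motivating examples: for a loop at a single vertex (an HNN extension such as $BS(1,2)$, where $\lambda_e \neq \lambda_{\bar e}$) no choice of $\mu_v$ works. Even granting some way to re-warp the interior, interpolating scales along the cylinder (e.g.\ a convex warping $\lambda^t$ in the $[0,1]$ direction) destroys local convexity of the end slice on the large-scale side, so the Reshetnyak-type gluing theorems you invoke would no longer apply to the attaching loci. And there is an a priori obstruction showing the assembled space you describe cannot exist in general: when the $X_v$ and $Y_e$ are circles your $\mathcal{Y}$ would be a \emph{compact} nonpositively curved space with fundamental group $\pi_1(\mathcal{G})$, impossible for Baumslag--Solitar groups because of their exponential Dehn functions.

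The paper's proof resolves exactly this obstruction, and does so by making the space noncompact: it crosses every vertex piece $X_v$ and every double cylinder $C(\varphi_e,\varphi_{\bar e})$ with the multi-warped factor $\reals^E$, warped by $f_\lambda(t)=\prod_{e\in E}\lambda_e^{t_e}$. Lemma 5 shows that the translation $(x,t)\mapsto(x,t+\delta_e)$ is an isometry $\lambda_e X\times_\lambda\reals^E\to X\times_\lambda\reals^E$, so the scale mismatch at each end of each cylinder is absorbed by a shift in the warped directions rather than by choosing vertex scales; the gluing theorems then apply to these shifted identifications of closed, locally convex subspaces, and since each shift is homotopic to the identity the quotient is homotopy equivalent to $\bar{\Gamma}(\mathcal{X})\times_\lambda\reals^E\simeq\Gamma(\mathcal{X})$. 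Your outline of the local curvature check and of the homotopy-type argument would be fine once the pieces exist, but without the $\reals^E$ warping (or some substitute mechanism for reconciling the $\lambda_e$ around cycles) the pieces $N_e$ with prescribed end scales $\mu_u,\mu_v$ simply do not exist, so the proposal as written does not prove the theorem.
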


Note that the space in this theorem is not required to be compact. In recent years, noncompact, nonpositively curved geometry has been used to replicate results from the compact case such as the Farrel-Jones fibered isomorphism conjecture for certain Baumslag-Solitar groups in \cite{FarrellWu14}. We conjecture that our construction can be used in similar contexts to extend results from the compact setting to a larger class of groups.

Our technique revolves around quotients of geodesic metric spaces. We begin with a discussion of the relevant geometry.

\section{Background}

Here we will review basic results in metric geometry and fix our notation for later use.

\subsection{Geodesic Geometry}

Let $(X,d)$ be a metric space, and let $\alpha \colon [0,1] \to X$ be a continuous path in $X$. Recall that the length of $\alpha$ in the metric $d$, if it exists, is the limit $$l_d(\alpha) = \lim_{\tau} \sum_{i=1}^p d(\alpha(t_{i-1}), \alpha(t_i)).$$ Here the limit is taken with respect to the refinement partial ordering of partitions $\tau: 0 = t_0 < t_1 < \ldots < t_p = 1$ as in \cite{Chen99}. With this notion of path length, we can define a minimal path length metric $d'$ on $X$ by taking \[
d'(x,y) = \inf_{\substack{\alpha \colon [0,1] \to X \\ \alpha(0) = x; \alpha (1) = y}} l_d(\alpha)
\]

If $d' = d$ we call $(X,d)$ a \textit{length space}. When the infimum of path lengths is realized as a minimum, the path of minimal length is called a \textit{minimizing geodesic}. A \textit{geodesic metric space} is a length space $(X,d)$ such that all points $x,y \in X$ are connected by a minimizing geodesic. Unless explicitly stated, all metric spaces from here on are assumed to be geodesic metric spaces. A subspace $Z$ of a geodesic metric space $X$ is called \textit{convex} if the minimizing geodesics connecting each pair of points $x,y \in Z$ lie in $Z.$ The subspace $Z$ is called \textit{locally convex} if it is covered by open sets (open in $Z$) which are convex in $X$.  

\subsubsection{Warped Products}

Let $X$ and $Y$ be geodesic metric spaces and let $f \colon Y \to (0,\infty) \subset \reals$ be a continuous function. We wish to define a metric on the product $X \times Y$ such that each slice $X \times \{ y \}$ is scaled by the positive number $f(y)$. In the Riemannian case, this can be accomplished by modifying the smooth metric directly since it is defined infinitesimally. In the continuous case, we don't have direct infinitesimal control over the metric, but we do have infinitesimal access to path lengths. 

Explicitly, given a path $(r,s) \colon [0,1] \to X \times Y$ we define the warped length to be \[ 
l_{X \times_f Y} ((r, s)) = \lim_\tau \sum_{i = 1}^p \sqrt{f(s(t_i))^2d_X(r(t_{i-1}),r(t_i))^2 + d_Y(s (t_{i-1}),s(t_i))^2}
\] Note that if $f$ is identically equal to $1$, then the warped path length of $(r,s)$ is its length in the standard \textit{Euclidean} product metric of $X \times Y$. In that sense, this new path length modifies the standard path length by scaling the $X$ part of each sum of squares by $f.$ We induce a ``warped" metric $d_{X \times_f Y}$ on the topological space $X \times Y$ by taking infimums of the warped path length $l_{X \times_f Y}$. The resulting metric space $(X \times Y, d_{X \times_f Y})$ is succinctly denoted by $X \times_f Y$.

In \cite{Chen99}, Chen proved the following theorem about these warped products. 

\begin{thm}[Chen 1999]
Let $(X,d_X)$ be a geodesic metric space, and let $(Y,d_Y)$ be a complete, locally compact length space. Let $f \colon Y \to (0,\infty)$ be continuous. Then $X \times_f Y$ is a geodesic metric space.
\end{thm}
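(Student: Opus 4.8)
The plan is to first verify that the warped length functional is well behaved enough to define a finite metric metrizing the product topology, then to show this metric is intrinsic, and finally to produce minimizing geodesics by collapsing the first factor onto a compact interval so that the Hopf--Rinow--Cohn-Vossen theorem becomes applicable. \textbf{The warped metric.} First I would check that the refinement limit defining $l_{X\times_f Y}((r,s))$ exists; since the values $f(s(t_i))$ vary with the partition, the partial sums are not obviously monotone, so one uses the uniform continuity of $f$ along the compact set $s([0,1])$ to control the discrepancies as the partition is refined. Granting this, $l_{X\times_f Y}$ is additive under concatenation and invariant under monotone reparametrization, so $d_{X\times_f Y}=\inf l_{X\times_f Y}$ is symmetric and satisfies the triangle inequality; it is finite because the concatenation of a geodesic in the slice $X\times\{y_0\}$ with a geodesic in the fiber $\{x_1\}\times Y$ has warped length at most $\big(\max_{s([0,1])}f\big)\,d_X(x_0,x_1)+d_Y(y_0,y_1)$. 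Two estimates fix the topology: each summand of the warped length dominates $d_Y(s(t_{i-1}),s(t_i))$, so $\pi_Y$ is $1$--Lipschitz and $d_{X\times_f Y}\ge d_Y\circ(\pi_Y\times\pi_Y)$; and a path of sufficiently small warped length has its $Y$--coordinate confined to a set on which $f\ge c>0$, so its warped length is at least $c$ times the $d_X$--length of its $X$--coordinate. Together these show $d_{X\times_f Y}$ is positive-definite and metrizes the product topology.

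\textbf{Intrinsicness.} Next I would show $X\times_f Y$ is a length space, by the general principle that infimizing a length functional yields an intrinsic metric. For any path $\alpha=(r,s)$, additivity of $l_{X\times_f Y}$ together with the bound $d_{X\times_f Y}(\alpha(a),\alpha(b))\le l_{X\times_f Y}(\alpha|_{[a,b]})$ shows that the $d_{X\times_f Y}$--length of $\alpha$ is $\le l_{X\times_f Y}(\alpha)$; the reverse bound follows by comparing each chord of a partition with the warped length of the corresponding subpath and then refining, once more using continuity of $f$. Hence the $d_{X\times_f Y}$--length of a path equals its warped length, and infimizing over paths returns $d_{X\times_f Y}$, so it is a length metric. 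Note this argument uses no hypothesis on $X$, so it applies verbatim when $X$ is replaced by a compact interval.

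\textbf{Geodesics.} Fix $p=(x_0,y_0)$, $q=(x_1,y_1)$, put $\ell=d_X(x_0,x_1)$, and pick a geodesic $c\colon[0,\ell]\to X$ from $x_0$ to $x_1$. I claim $d_{X\times_f Y}(p,q)=d_{[0,\ell]\times_f Y}\big((0,y_0),(\ell,y_1)\big)$. For ``$\ge$'', the map $\phi\colon X\to[0,\ell]$, $\phi(x)=\min\{d_X(x_0,x),\ell\}$, is $1$--Lipschitz with $\phi(x_0)=0$ and $\phi(x_1)=\ell$, so $(r,s)\mapsto(\phi\circ r,s)$ cannot increase warped length; for ``$\le$'', $c$ is an isometric embedding, so $(\rho,\sigma)\mapsto(c\circ\rho,\sigma)$ preserves warped length. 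Now $[0,\ell]\times_f Y$ is a length space by the previous step, it is locally compact (closed balls of $Y$ are compact by Hopf--Rinow for the complete, locally compact length space $Y$, and $\pi_Y$ is $1$--Lipschitz), and it is complete (a Cauchy sequence has a convergent $Y$--coordinate by completeness of $Y$ and, being bounded, a convergent $[0,\ell]$--coordinate along a subsequence, hence converges in the product topology and so in the warped metric). By the Hopf--Rinow--Cohn-Vossen theorem $[0,\ell]\times_f Y$ is a geodesic space; a minimizing geodesic $(\rho,\sigma)$ from $(0,y_0)$ to $(\ell,y_1)$ then lifts to the path $(c\circ\rho,\sigma)$ in $X\times_f Y$ from $p$ to $q$, whose warped length equals $d_{[0,\ell]\times_f Y}((0,y_0),(\ell,y_1))=d_{X\times_f Y}(p,q)$, i.e.\ it is a minimizing geodesic.

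\textbf{Main obstacle.} The crux is that $X$ is only assumed geodesic, so $X\times_f Y$ is in general neither proper nor complete and Hopf--Rinow cannot be invoked for it directly; the device of collapsing $X$ onto $[0,\ell]$ by the $1$--Lipschitz retraction $\phi$ and lifting back along the isometric geodesic $c$ is what resolves this. The remaining difficulty is analytic rather than geometric: carefully establishing that the refinement limit defining $l_{X\times_f Y}$ exists and coincides with the $d_{X\times_f Y}$--length, where the (locally uniform) continuity of $f$ along paths in $Y$ must be used.
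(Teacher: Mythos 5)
This statement is one the paper does not prove at all: it is Theorem 3, imported verbatim from Chen's paper \cite{Chen99}, so there is no internal argument to compare yours against, and your proposal should be judged as an independent proof. On its merits it is correct in outline and the central idea is sound: the identity $d_{X\times_f Y}(p,q)=d_{[0,\ell]\times_f Y}((0,y_0),(\ell,y_1))$ follows exactly as you say, since the collapse $(r,s)\mapsto(\phi\circ r,s)$ decreases every term of every warped partition sum (the $f$-factor depends only on $s$ and $\phi$ is $1$-Lipschitz), while the lift $(\rho,\sigma)\mapsto(c\circ\rho,\sigma)$ along the unit-speed geodesic $c$ preserves every term; this cleanly isolates where each hypothesis enters ($X$ geodesic supplies $c$; completeness and local compactness of $Y$ make $[0,\ell]\times_f Y$ a complete, proper length space, so Hopf--Rinow--Cohn-Vossen yields the minimizer, which then reparameterizes to a geodesic in $X\times_f Y$ because its warped length equals $d_{X\times_f Y}(p,q)$). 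The deferred analytic steps are genuinely where the work lies, and they do go through: existence of the refinement limit needs a case split (if either coordinate path is non-rectifiable the net diverges to $+\infty$; otherwise refining a partition can decrease the sum only by at most $\omega(f\circ s)\cdot l_{d_X}(r)$, where $\omega$ is the oscillation over the mesh, which gives convergence of the net), completeness of $[0,\ell]\times_f Y$ uses the agreement of the warped and product topologies that you establish in the first step, and for the final conclusion you only need $l_{d_{X\times_f Y}}(\alpha)\le l_{X\times_f Y}(\alpha)$ together with the triangle inequality, not the full equality of the two length functionals. So your route is a legitimate, self-contained alternative to simply citing Chen; what the citation buys the paper is precisely the avoidance of the length-functional analysis you flagged as the main technical burden.
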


A geodesic metric space $X$ is said to be nonpositively curved if it is complete, and each point in $X$ is contained in a closed, convex neighborhood which is CAT(0) with the restricted metric. Chen also proved that curvature interacts with warped products in a similar manner to the Riemmannian case.

\begin{thm}[Chen 1999]
If $(X, d_X)$ is nonpositively curved, and $f \colon \reals \to (0, \infty)$ is convex, then $X \times_f \reals$ is nonpositively curved. 
\end{thm}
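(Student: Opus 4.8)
The plan is to exploit that nonpositive curvature is a local property, so it suffices to produce, around each point $(x,t_0)\in X\times_f\reals$, a closed convex neighborhood that is CAT(0). Since $X$ is nonpositively curved, first choose a small closed convex CAT(0) neighborhood $U$ of $x$ and a short closed interval $J\ni t_0$. Before anything else I would check that, for $U$ and $J$ small, $N:=U\times_f J$ is a closed convex subset of $X\times_f\reals$ whose restricted metric coincides with its intrinsic warped metric; convexity follows from the structure of geodesics of a warped product over $\reals$ — a unit-speed geodesic has the form $u\mapsto(c(u),\sigma(u))$ with $c$ a reparametrized geodesic of $X$, which remains in $U$ by convexity of $U$, while $\sigma$ is $1$-Lipschitz by the unit-speed relation $\sigma'^2+(f\circ\sigma)^2|c'|^2=1$, so $\sigma$ stays close to $t_0$ once $\operatorname{diam}N$ is small. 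Then, since $J$ is a complete, locally compact length space and $f|_J$ is continuous, Chen's theorem on geodesics in warped products makes $N$ a geodesic metric space, and the problem reduces to the claim that $U\times_f J$ is CAT(0) whenever $U$ is CAT(0), $J$ is a compact interval, and $f|_J$ is convex and positive.

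I would prove this claim by verifying the CAT(0) comparison inequality for an arbitrary geodesic triangle $\Delta\subset N$, assembling the estimate from two inputs. First, for every $\lambda>0$ the rescaled space $\lambda\cdot X$ is again CAT(0) (the CAT(0) inequality is scale invariant), which is what makes motion in the fiber direction contribute nonpositively. Second, the two-dimensional warped product $\reals\times_f J$ is CAT(0): after replacing $f$ by smooth convex mollifications $f_n\to f$ — for which the warped metrics converge uniformly on compact sets, so CAT(0) passes to the limit — it is a Riemannian surface of Gauss curvature $-f_n''/f_n\le 0$, hence CAT(0) by classical comparison; this captures the base direction. For a general $\Delta$ whose vertices lie in different fibers and whose sides cross many fibers, one ``develops'' $\Delta$ against these two models: the sides project to reparametrized geodesics of $U$, so the $X$-shadow of $\Delta$ is a genuine geodesic triangle in the CAT(0) space $U$; its $\reals$-shadow obeys the same warped geodesic equation as in the surface model; and the first variation (angle) formula for warped products lets one glue the two comparisons side by side via Alexandrov's lemma, the point being that each correction term acquires the sign forced by $\mathrm{curv}(U)\le 0$ and $f''\ge 0$.

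The step I expect to be the main obstacle is precisely this assembly: making the simultaneous development of a mixed geodesic triangle against the fiber model and the surface model rigorous, and arranging the Alexandrov decomposition so that the error terms carry the correct sign; the merely-convex rather than smooth hypothesis on $f$ then adds only a routine approximation layer. A clean way to bypass re-deriving these estimates is to invoke the general warped-product curvature criterion of Alexander and Bishop: with base $\reals$, fiber the CAT(0) space $X$, and a convex positive warping function, their hypotheses for a CAT(0) upper bound are satisfied, which yields the claim at once.
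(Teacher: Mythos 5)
You should know that the paper does not prove this statement at all: it is imported wholesale as Theorem~4 from Chen's 1999 paper (whose actual argument runs through smooth approximation of the warping function and comparison estimates for geodesics in the warped product), so there is no internal proof to match and your proposal has to stand on its own. As written it does not: the entire content of the theorem --- the CAT(0) comparison for triangles in the warped product over a CAT(0) piece of $X$ --- is precisely the part you flag as ``the main obstacle'' and leave as a plan (develop against the fiber model and the surface $\reals\times_f J$, glue via Alexandrov's lemma, check the signs of the correction terms) rather than an argument. The closing appeal to Alexander--Bishop's warped-product curvature criterion would indeed give the statement, but then the preceding sketch does no work and you have merely traded the citation of Chen for a citation of Alexander--Bishop; if that is the intended route you still have to verify their hypotheses in this setting, in particular that their result applies when the fiber $X$ is only nonpositively curved (locally CAT(0)) rather than globally CAT(0), which is exactly the localization issue below.

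The localization step also contains a concrete error: $N=U\times_f J$ is in general \emph{not} convex in $X\times_f\reals$. The $\reals$-component of a geodesic is governed (in the smooth model) by $\sigma''=f(\sigma)f'(\sigma)\,|c'|_X^2$, so geodesics bow toward the region where $f$ is smaller; for instance in $\reals\times_{e^t}\reals$ (a hyperbolic plane) the geodesic joining two points on the lower edge of a rectangle $U\times J$ dips below $J$. Your 1-Lipschitz estimate only shows that $\sigma$ stays within a neighborhood of $J$ of size comparable to the length of the geodesic, i.e.\ inside a slightly larger product, which is not convexity of $N$; shrinking $N$ does not help, because the failure occurs at the boundary of $J$ at every scale. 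The natural repair --- and the one this paper's own machinery already supports, via Chen's lemma that geodesics of $X\times_f\reals$ project to reparametrized geodesics of $X$ (Corollary~1 here) --- is to take the closed convex CAT(0) neighborhood of $(x,t_0)$ to be $U\times_f\reals$, warping the \emph{whole} line over a closed convex CAT(0) neighborhood $U$ of $x$, and then to prove the global claim that $U\times_f\reals$ is CAT(0) when $U$ is complete CAT(0) and $f$ is convex; truncating the base to a compact interval buys nothing (convexity of $f$ is already a global hypothesis) and is what destroys convexity of your neighborhood.
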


The proof of Theorem 4 relies on the following lemma from \cite{Chen99}.

\begin{lemma}[Chen 1999]
Let $\sigma = (\gamma, \delta)$ be a geodesic in $X \times_f Y,$ then, up to reparameterization, $\gamma$ is a geodesic in $X$.   
\end{lemma}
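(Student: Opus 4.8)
The plan is to argue by contradiction: I will show that if $\gamma$, the $X$-component of the geodesic $\sigma = (\gamma,\delta)\colon[0,1]\to X\times_f Y$, fails to trace out a minimizing geodesic of $X$, then $\sigma$ itself can be shortened, contradicting that it is a geodesic. First I would fix notation: let $\ell(t)=l_X(\gamma|_{[0,t]})$ be the arc-length function of $\gamma$. Since $\delta([0,1])$ is compact and $f>0$ is continuous, $m:=\min_{[0,1]} f\circ\delta>0$, and each warped Riemann sum for $\sigma$ dominates $m\sum d_X(\gamma(t_{i-1}),\gamma(t_i))$; hence $\gamma$ is rectifiable, with $L:=\ell(1)=l_X(\gamma)<\infty$. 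Set $D:=d_X(\gamma(0),\gamma(1))$. Always $D\le L$, and $\gamma$ is a reparameterized minimizing geodesic precisely when $D=L$ (the standard fact that a curve realizing the distance between its endpoints is, after arc-length reparameterization, a minimizing geodesic, with all its subarcs minimizing). So assume $D<L$; then $L>0$ and the metric speed $v_\gamma=\ell'$ (which exists a.e.\ for a rectifiable curve) is positive on a set of positive measure.

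Next I would build a competitor with the same endpoints in $X\times_f Y$. Fix a constant-speed minimizing geodesic $c\colon[0,1]\to X$ from $\gamma(0)$ to $\gamma(1)$ (so $c$ has speed $D$), and define $\gamma'\colon[0,1]\to X$ by $\gamma'(t)=c(\ell(t)/L)$, together with $\sigma'=(\gamma',\delta)$. Then $\sigma'$ runs from $(\gamma(0),\delta(0))$ to $(\gamma(1),\delta(1))$, its $X$-speed is $v_{\gamma'}(t)=\tfrac{D}{L}v_\gamma(t)$ a.e., and its $Y$-component is unchanged. Using the integral representation of warped length, namely $l_{X\times_f Y}((\alpha,\beta))=\int_0^1\sqrt{f(\beta(t))^2 v_\alpha(t)^2+v_\beta(t)^2}\,dt$ for rectifiable $(\alpha,\beta)$, together with $f>0$ and the fact that $u\mapsto\sqrt{au^2+b}$ is strictly increasing on $[0,\infty)$ when $a>0$, I obtain
\[
l_{X\times_f Y}(\sigma')=\int_0^1\sqrt{f(\delta)^2\,\tfrac{D^2}{L^2}v_\gamma^2+v_\delta^2}\,dt \;<\; \int_0^1\sqrt{f(\delta)^2 v_\gamma^2+v_\delta^2}\,dt=l_{X\times_f Y}(\sigma),
\]
the strict inequality coming from $\tfrac{D}{L}v_\gamma<v_\gamma$ on a set of positive measure (and equality of integrands a.e.\ off it). This contradicts minimality of $\sigma$, so $D=L$ and $\gamma$ is, up to reparameterization, a geodesic of $X$.

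The main obstacle is the rigorous justification of the warped-length comparison, i.e.\ the integral representation above (or an equivalent direct estimate on the defining Riemann sums). One cannot compare the sums for $\sigma$ and $\sigma'$ term by term: replacing $\gamma$ by the geodesic $c$ can \emph{increase} individual chord lengths $d_X(\gamma'(t_{i-1}),\gamma'(t_i))$ relative to $d_X(\gamma(t_{i-1}),\gamma(t_i))$ on intervals where $\gamma$ backtracks, so the comparison must be made at the level of speeds, where $v_{\gamma'}=\tfrac{D}{L}v_\gamma$ holds cleanly. I would establish the integral formula by the usual argument: refine the partition and use that $\sum d_X(\gamma(t_{i-1}),\gamma(t_i))\to\int v_\gamma$ and $\sum d_Y(\delta(t_{i-1}),\delta(t_i))\to\int v_\delta$, combined with uniform continuity of $f\circ\delta$ and the $1$-Lipschitz dependence of $(p,q)\mapsto\sqrt{p^2+q^2}$ on each argument, to show the warped Riemann sums converge to the stated integral; alternatively one can invoke this directly from \cite{Chen99}, where it underlies the proof of Theorem 4. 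Once that representation is in hand, the endpoint computation for $\sigma'$, the strict inequality, and the deduction that $l_X(\gamma)=d_X(\gamma(0),\gamma(1))$ forces $\gamma$ to be a reparameterized minimizing geodesic are all routine.
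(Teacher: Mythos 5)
The paper offers no proof of this lemma to compare against: it is quoted directly from Chen \cite{Chen99} and used as a black box. Judged on its own, your reconstruction is the standard (and almost certainly Chen's) argument: assuming $D:=d_X(\gamma(0),\gamma(1))<L:=l_X(\gamma)$, you run a minimizing geodesic of $X$ with the rescaled speed profile $\tfrac{D}{L}v_\gamma$ against the unchanged $Y$-component $\delta$, and the integral formula for warped length gives a strictly shorter competitor with the same endpoints, contradicting minimality of $\sigma$; the deduction that $D=L$ forces $\gamma$ to be a reparameterized geodesic is standard. You also correctly identify the integral representation as the load-bearing step and correctly warn that a chord-by-chord comparison of Riemann sums would fail where $\gamma$ backtracks.

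One step needs tightening. The representation $l_{X\times_f Y}(\alpha,\beta)=\int_0^1\sqrt{f(\beta)^2v_\alpha^2+v_\beta^2}\,dt$, and even the identity $l_X(\gamma)=\int_0^1\ell'$, are false for merely rectifiable parameterizations: if $\gamma$ is a unit-speed geodesic precomposed with the Cantor function, then $\ell'=0$ a.e.\ while $L>0$. So ``$v_\gamma=\ell'$ exists a.e.\ for a rectifiable curve'' is not enough; you need $\ell$ absolutely continuous. The repair is cheap and should be made explicit: since the conclusion is only up to reparameterization, first reparameterize $\sigma$ proportionally to its warped arclength (or invoke the convention, as in \cite{BridHaef99}, that a geodesic is an isometric embedding of an interval, hence already Lipschitz). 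Because $m=\min f\circ\delta>0$ on the compact image, every warped Riemann sum of $\sigma$ over a subinterval dominates both $m\sum d_X(\gamma(t_{i-1}),\gamma(t_i))$ and $\sum d_Y(\delta(t_{i-1}),\delta(t_i))$, so with this parameterization $\gamma$, $\delta$, and $\ell$ are Lipschitz; metric speeds then exist a.e., $v_{\gamma'}=\tfrac{D}{L}v_\gamma$ a.e., and the integral formula (which is established in \cite{Chen99}) applies to both $\sigma$ and your competitor $\sigma'$. With that adjustment your strict-inequality argument and the final conclusion go through.
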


This gives us the following corollary which will be key in our constructions

\begin{cor}
Let $X$ be a complete geodesic metric space, and let $Z \subset X$ be a convex subspace of $X.$  Then $Z \times_f \reals$ is a convex subspace of $X \times_f \reals$. 
\end{cor}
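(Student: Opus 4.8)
The goal is to show that if $Z \subset X$ is convex, then $Z \times_f \reals$ is convex in $X \times_f \reals$. The plan is to take two points $(z_0, t_0)$ and $(z_1, t_1)$ in $Z \times_f \reals$ and any minimizing geodesic $\sigma = (\gamma, \delta)$ between them in $X \times_f \reals$, and show that $\gamma$ stays in $Z$. First I would invoke Lemma 1 (Chen's lemma): up to reparameterization, $\gamma$ is a geodesic in $X$. Since $\gamma(0) = z_0$ and $\gamma(1) = z_1$ both lie in $Z$, and $Z$ is convex in $X$, this geodesic $\gamma$ lies entirely in $Z$. The $\reals$-component $\delta$ automatically lies in $\reals$, so $\sigma = (\gamma, \delta)$ lies in $Z \times \reals$, which is exactly the underlying set of $Z \times_f \reals$. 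This shows the minimizing geodesic lies in the subspace.

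The subtlety I anticipate is the distinction between "the minimizing geodesics lie in $Z \times_f \reals$ as a subset of the topological space $X \times \reals$" and "$Z \times_f \reals$ is a convex \emph{metric} subspace" — i.e., one should check that the metric $d_{X \times_f \reals}$ restricted to the subset $Z \times \reals$ agrees with the intrinsic warped metric $d_{Z \times_f \reals}$ defined on $Z$. Because $Z$ is convex in $X$, geodesics between points of $Z$ computed in $X$ already lie in $Z$, so path lengths and hence the infimum defining the warped metric are unaffected by whether we allow the $X$-component to wander outside $Z$; thus the restricted metric coincides with the intrinsic one, and "convex subspace" is unambiguous here. I would state this coincidence explicitly, perhaps as a short remark, since it is what makes the corollary meaningful rather than merely a statement about geodesics in the ambient space.

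The one genuine gap to watch is the edge case where a minimizing geodesic $\sigma$ in $X \times_f \reals$ might not be unique, or where Chen's Lemma 1 is stated only for a given geodesic $\sigma$ — but since the argument applies to \emph{every} minimizing geodesic between the two chosen points, and each one has $X$-component a geodesic in $X$ forced into $Z$ by convexity, there is no issue. I expect the main (only) obstacle is purely expository: being careful that Lemma 1's hypotheses are met (the geodesic in question is a genuine minimizing geodesic in the warped product, and $f$ is a fixed continuous positive function on $\reals$, so Theorem 3 guarantees $X \times_f \reals$ is itself a geodesic metric space when $X$ is, though here we only need that geodesics between the given endpoints exist, which they do by hypothesis). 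Assembling these pieces gives the result directly, with no computation required.
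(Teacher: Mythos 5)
Your argument is exactly the paper's: take any geodesic $\sigma=(\gamma,\delta)$ in $X\times_f\reals$ between points of $Z\times_f\reals$, apply Lemma 1 to conclude $\gamma$ is a geodesic of $X$, and use convexity of $Z$ to force $\gamma$ (hence $\sigma$) into $Z\times_f\reals$. Your extra remarks about the restricted versus intrinsic metric are a reasonable clarification but not part of the paper's (correct) proof; no gap either way.
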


\begin{proof}
Let $(z,t), (z',t') \in Z \times_f \reals.$ and let $\sigma = (\gamma, \delta)$ be a geodesic in $X \times_f \reals$ connecting $(z, t)$ to $(z', t').$ By lemma 1, $\gamma$ is a geodesic in $X.$ Since $Z$ is convex, $\gamma$ lies in $Z,$ and so $(\gamma, \delta)$ lies in $Z \times_f \reals.$
\end{proof}

Applying this to small neighborhoods, we also have that $Z \times_f \reals$ is locally convex in $X \times_f \reals$ if $Z$ is locally convex in $X.$

Of particular interest to us will be warpings by functions of the following type. Let $\lambda > 0$ be a real number; let $f_\lambda \colon \reals \to (0,\infty), t \mapsto \lambda^t$. We will refer to warped products of the form $X \times_{f_\lambda} \reals$ many times in our constructions; as such, we introduce the notation $X \times_\lambda \reals = X \times_{f_\lambda} \reals.$ The important property of these warped products is expressed in the following lemma.

\begin{lemma}
Let $(X, d)$ be a geodesic metric space, and let $\lambda > 0;$ let $\lambda X$ be the geodesic metric space $(X, \lambda d).$ Then the function $\lambda X \times_\lambda \reals \to X \times_\lambda \reals, (x, t) \mapsto (x, t+1)$ is an isometry.
\end{lemma}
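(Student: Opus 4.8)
The plan is to show that the vertical shift $\Phi(x,t) = (x,t+1)$ preserves warped path lengths. Since both $d_{\lambda X \times_\lambda \reals}$ and $d_{X \times_\lambda \reals}$ are defined as infima of warped path lengths, and $\Phi$ is a homeomorphism of the underlying topological space $X \times \reals$, equality of path lengths under $\Phi$ will immediately give that $\Phi$ is an isometry.

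First I would record the easy structural facts: $\Phi$ is a bijection of $X \times \reals$ with continuous inverse $(x,t)\mapsto (x,t-1)$, and post-composition with $\Phi$ sets up a bijection between continuous paths $[0,1]\to X\times\reals$ with endpoints $(x,t),(x',t')$ and continuous paths with endpoints $(x,t+1),(x',t'+1)$. This reduces the lemma to showing $l_{\lambda X \times_\lambda \reals}\big((r,s)\big) = l_{X \times_\lambda \reals}\big(\Phi\circ(r,s)\big)$ for every path $(r,s)$.

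The substantive step is a term-by-term comparison of the two warped-length sums. Fix a path $(r,s)\colon[0,1]\to X\times\reals$ and a partition $\tau$. In the definition of $l_{\lambda X \times_\lambda \reals}$ the $X$-factor carries the metric $\lambda d_X$ and the warping is by $f_\lambda(u)=\lambda^u$, so the $i$-th summand is $\sqrt{\lambda^{2s(t_i)}\,\lambda^2 d_X(r(t_{i-1}),r(t_i))^2 + \lvert s(t_{i-1})-s(t_i)\rvert^2}$. Using the identity $\lambda^{2s(t_i)}\lambda^2 = \lambda^{2(s(t_i)+1)}$ and the translation invariance of the metric on $\reals$, this equals $\sqrt{f_\lambda\big(s(t_i)+1\big)^2 d_X(r(t_{i-1}),r(t_i))^2 + \lvert (s(t_{i-1})+1)-(s(t_i)+1)\rvert^2}$, which is exactly the $i$-th summand in $l_{X \times_\lambda \reals}$ for the path $\Phi\circ(r,s) = (r,\,s+1)$. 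The sums therefore coincide for every partition, so their limits along the refinement ordering coincide as well (in particular one exists iff the other does), giving the desired equality of lengths. Taking infima over paths with fixed endpoints on each side, and invoking the path bijection, yields $d_{\lambda X \times_\lambda \reals}\big((x,t),(x',t')\big) = d_{X \times_\lambda \reals}\big((x,t+1),(x',t'+1)\big)$.

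I do not expect a real obstacle here; the proof is essentially one algebraic identity plus translation invariance. The only point requiring care is keeping the two roles of $\lambda$ straight — as the scaling factor of the $X$-coordinate and as the base of the exponential warping — and noticing that it is precisely the relation $\lambda\cdot\lambda^{s} = \lambda^{s+1}$ that makes a shift by exactly $1$ (rather than any other amount) the correct isometry.
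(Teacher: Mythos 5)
Your proposal is correct and follows essentially the same route as the paper's proof: a term-by-term comparison of the warped-length sums using the identity $\lambda^{s(t_i)}\cdot\lambda = \lambda^{s(t_i)+1}$ together with translation invariance on $\reals$, concluding that the shift preserves path lengths and hence the induced length metrics. The only difference is that you spell out the path-bijection and infimum step that the paper leaves implicit, which is a harmless (and welcome) bit of extra care.
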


\begin{proof}
Let $(r,s) : [0,1] \to  X \times \reals$ be a continuous curve. Let $l_1$ be the warped length function in $X \times_\lambda \reals;$ let $l_2$ be the warped length function in $\lambda X \times_\lambda \reals;$. Then 

\begin{align*}
l_2(r,s) &= \lim_{\tau \to \infty} \sum_{i=1}^n \sqrt{(\lambda^{s(t_i)})^2 (\lambda d_X)(r(t_{i-1}),r(t_i))^2 + d_\reals(s(t_{i-1}),s(t_i))^2} \\
&= \lim_{\tau \to \infty} \sum_{i=1}^n \sqrt{(\lambda^{s(t_i)})^2(\lambda)^2  d_X(r(t_{i-1}),r(t_i))^2 + |s(t_i) - s(t_{i-1})|^2} \\
&= \lim_{\tau \to \infty} \sum_{i=1}^n \sqrt{(\lambda^{s(t_i)+1})^2  d_X(r(t_{i-1}),r(t_i))^2 + |(s(t_i) + 1) - (s(t_{i-1})+1)|^2} \\
&= l_1(r, s + 1).
\end{align*} Thus the mapping $(x, t) \mapsto (x, t+1)$ is a homeomorphism which preserves path lengths and therefore is an isometry between intrisic metric spaces.
\end{proof}

\subsubsection{Gluing theorems}

Our construction relies heavily on the following gluing theorems for spaces of bounded curvature; see pages 351 and 352 of \cite{BridHaef99}. In these theorems, the quotient spaces are given the quotient pseudoemetric as defined on page 65 of \cite{BridHaef99}. By results in that same section, these pseudometrics are in fact metrics.

\begin{thm}
Let $X$ be a metric space of curvature $\leq k.$ Let $A_1$ and $A_2$ be two closed, disjoint subspaces of $X$ that are locally convex and complete. If $i \colon A_1 \to A_2$ is a bijective local isometry, then the quotient of $X$ by the equivalence relation generated by $[a_1 \sim i(a_1), \forall a_1 \in A_1]$ has curvature $\leq k$.
\end{thm}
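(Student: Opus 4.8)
The plan is to use that the curvature bound is, by definition, a local condition: it suffices to show that every point of the quotient $\bar X$ has a neighborhood which is CAT($k$). Write $q\colon X\to\bar X$ for the quotient map. Because $A_1$ and $A_2$ are disjoint, the nontrivial equivalence classes are exactly the two-point sets $\{a,i(a)\}$ with $a\in A_1$, so every point $\bar x$ of $\bar X$ has either a single preimage lying in $X\setminus(A_1\cup A_2)$, or exactly two preimages $a_1\in A_1$ and $a_2=i(a_1)\in A_2$, with $a_1\neq a_2$. In the first case the preimage has a ball missing the closed sets $A_1,A_2$, and $q$ maps that ball isometrically onto a neighborhood of $\bar x$, so $\bar x$ inherits a CAT($k$) neighborhood from $X$. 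Thus the whole problem concentrates at the glued points.

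Fix a glued point $\bar a$ with preimages $a_1\in A_1$ and $a_2=i(a_1)\in A_2$. First I would build a local model for a neighborhood of $\bar a$. Choose $r>0$ small enough that: the closed balls $N_j=\bar B_X(a_j,r)$ are convex and CAT($k$) (possible since $X$ has curvature $\le k$, keeping $r$ small in terms of $k$ when $k>0$); $C_j:=A_j\cap N_j$ is convex in $N_j$ (local convexity of $A_j$) and complete (it is closed in the complete space $A_j$); $i$ restricts to an isometry of $C_1$ onto $C_2$ (shrinking $r$, using that $i$ is a local isometry and that its continuity keeps $i(C_1)$ inside $N_2$); and $r<\tfrac12\,d_X(a_1,a_2)$, so that $N_1$ and $N_2$ are disjoint in $X$. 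Put $M=\bigl(N_1\sqcup N_2\bigr)\big/\bigl(c\sim i(c):c\in C_1\bigr)$ with its quotient length metric. Then $M$ is a gluing of two CAT($k$) spaces along complete convex subspaces identified by an isometry, hence $M$ is CAT($k$): this is Reshetnyak's gluing theorem in the CAT($k$) form (see \cite{BridHaef99}, pp.~351--352), whose proof amounts to observing that a geodesic of $M$ joining points on the two sides crosses the identified subspace in a single point and then pasting the two model comparison triangles across that point via Alexandrov's lemma.

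It remains to show that a small metric ball of $\bar X$ about $\bar a$ is isometric to a ball of $M$, and this is the step I expect to be the main obstacle. The natural map $M\to\bar X$, induced by $N_1\sqcup N_2\hookrightarrow X$, is $1$-Lipschitz, so on images of points of $N_1\sqcup N_2$ one has $d_{\bar X}\le d_M$; the work is the reverse inequality for points near $\bar a$. For this I would unwind the definition of the quotient pseudometric: a path in $\bar X$ lifts to a finite concatenation of paths in $X$ whose consecutive endpoints are $i$-identified. If the $\bar X$-length of such a path is below a suitable $\rho=\rho(r)>0$, its first lifted piece remains within $B_X(a_1,\rho)$, hence ends at some $a'\in A_1$ with $d_X(a',a_1)<\rho$; by continuity of $i$ the next piece starts within a controlled distance of $a_2$; and, inductively, the entire concatenation stays in $N_1\cup N_2$ and so descends into $M$. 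Here the disjointness and closedness of $A_1,A_2$ are exactly what keep the ``$a_1$-side'' and the ``$a_2$-side'' apart in $X$ and make this induction close up. Consequently $d_{\bar X}=d_M$ near $\bar a$, so a small metric ball of $\bar X$ about $\bar a$ is isometric to a ball of $M$ and is therefore CAT($k$); together with the easy case this shows every point of $\bar X$ has a CAT($k$) neighborhood, i.e.\ $\bar X$ has curvature $\le k$.
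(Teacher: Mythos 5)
First, note that the paper itself does not prove this statement: it is quoted from Bridson--Haefliger (pp.~351--352 of \cite{BridHaef99}) and used as a black box, so there is no in-paper proof to compare against. Your localize-and-compare-to-a-Reshetnyak-model strategy is the natural one, but as written it has a genuine gap at exactly the step you yourself flag as the main obstacle.

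The problem is that you only ever use continuity of $i$, while half of the jumps in a lifted chain are governed by $i^{-1}$, and a bijective local isometry need not be open, so $i^{-1}$ need not be continuous. Concretely: (a) your model takes $C_2=A_2\cap N_2$ and asserts that after shrinking $r$ the map $i$ restricts to an isometry of $C_1$ \emph{onto} $C_2$; shrinking $r$ makes $i|_{C_1}$ distance-preserving, but nothing forces $i(C_1)$ to exhaust $A_2\cap N_2$. (b) In the chain induction, a segment ending at a point of $A_2$ very close to $a_2$ may be followed by a jump to its $i$-preimage, which can lie far from $a_1$; so the concatenation need not stay in $N_1\cup N_2$, and in fact the conclusion ``a small ball of $\bar{X}$ about $\bar{a}$ is isometric to a ball of $M$'' is false in general, not merely unproven. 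Example: let $X$ be the Euclidean plane with, for each $t\in(\tfrac12,1]$, a segment (``hair'') of length $1$ attached at $(t,2)$; let $A_2=[0,1]\times\{0\}$ and $A_1=\bigl([0,\tfrac12]\times\{3\}\bigr)\cup\{\text{free endpoints of the hairs}\}$, with $i$ the vertical translation on the segment and $\mathrm{tip}(t)\mapsto(t,0)$ on the tips. All hypotheses of the theorem hold ($A_1,A_2$ closed, complete, locally convex, disjoint; $i$ is a bijective local isometry, each tip being isolated in $A_1$), yet every ball of the quotient about the glued point $[(\tfrac34,0)]$ contains the terminal pieces of all hairs with parameter $t'$ near $\tfrac34$, reached by the short chain through $\mathrm{tip}(t')\sim(t',0)$; such a ball has whiskers attached at infinitely many points and is not isometric to a ball of your model $M$, which is a disc with a single whisker at its center.

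The theorem is still true in this example (the actual neighborhood is again CAT($0$)), which is the point: the genuine difficulty of the statement is controlling exactly these ``far-side'' identifications that enter every neighborhood of a glued point, and that is what the argument in \cite{BridHaef99} has to handle and what your sketch does not. If you intend ``local isometry'' in the stronger sense that $i$ carries neighborhoods onto neighborhoods (so that $i$ is a homeomorphism and $i^{-1}$ is itself a local isometry), you must state and use that, replace $C_2$ by $i(C_1)$ together with an argument that $A_2\cap B(a_2,r')\subset i(C_1)$ for small $r'$, and only then does your symmetric induction have a chance to close up. A smaller omission of the same flavor: even at unglued points you need the one-line chain estimate (any chain using the gluing has length at least $d(y,A_1\cup A_2)+d(z,A_1\cup A_2)$) to see that $q$ is isometric on a small ball; that part is routine, but it is the same issue in miniature and should be said.
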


\begin{thm}
Let $X_1$ and $X_2$ be metric spaces of curvature $\leq k$ and let $A_1 \subset X_1$ and $A_2 \subset X_2$ be closed subspaces that are locally convex and complete. If $i \colon A_1 \to A_2$ is a bijective local isometry then the quotient of $X = X_1 \sqcup X_2$ by the equivalence relaton generated by $[a_1 \sim i(a_1) \forall a_1 \in A_1]$ has curvature $\leq k$. 
\end{thm}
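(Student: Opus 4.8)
The plan is to deduce this from the immediately preceding theorem (Theorem 5) by the standard ``disjoint union'' device, so that no genuinely new argument is required. First I would set $X := X_1 \sqcup X_2$. Curvature $\leq k$ is a purely local condition: every point of $X$ has a small closed ball lying inside one of the two clopen summands $X_j$, and that ball is CAT($k$) by hypothesis, so $X$ is a space of curvature $\leq k$ (read as an extended metric space if one insists on keeping track of the $\infty$-valued distances between the two pieces, or simply componentwise --- the localization behind Theorem 5 is insensitive to this point).

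Next I would verify that, viewed inside $X$, the triple $(A_1, A_2, i)$ satisfies the hypotheses of Theorem 5. Indeed $A_1 \subset X_1$ and $A_2 \subset X_2$ are disjoint subsets of $X$ (the summands $X_1$ and $X_2$ being disjoint in $X = X_1 \sqcup X_2$); each $A_j$ is closed in $X$, being closed in the clopen subspace $X_j$; each is complete by hypothesis; and each is locally convex in $X$, since a geodesic of $X$ joining two points of the clopen summand $X_j$ cannot leave $X_j$, so that ``convex in $X_j$'' and ``convex in $X$'' coincide for subsets of $X_j$. The map $i \colon A_1 \to A_2$ is, unchanged, a bijective local isometry. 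Hence Theorem 5 applies to $(X, A_1, A_2, i)$ and tells us that the quotient of $X$ by the equivalence relation generated by $a_1 \sim i(a_1)$ has curvature $\leq k$. That quotient is exactly the space in the statement, so we are done.

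Since Theorem 5 is invoked as a black box, it is worth recording where the real work sits. Its proof is again local: a point of the quotient off the image of $A_1$ has a neighborhood isometric to a small ball of $X$, hence CAT($k$); a point $\bar a$ arising from $a_1 \in A_1$ and $a_2 = i(a_1) \in A_2$ has, by local convexity, small convex CAT($k$) balls $B_1 \ni a_1$ and $B_2 \ni a_2$ inside which $A_1 \cap B_1$ and $A_2 \cap B_2$ are convex and complete and $i$ restricts to an honest isometry between them, so a neighborhood of $\bar a$ is identified with the Reshetnyak gluing $B_1 \sqcup_i B_2$, which is CAT($k$). The single genuinely delicate point --- the only place I would expect to have to work --- is checking that the quotient (pseudo)metric restricted to such a neighborhood really agrees with the glued metric on $B_1 \sqcup_i B_2$ rather than being strictly smaller; this is where completeness and local convexity of the $A_j$ earn their keep, ruling out length-shortening paths that cross the seam back and forth, and it is carried out on pages 351--352 of \cite{BridHaef99}.
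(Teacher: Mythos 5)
Your proposal is correct in substance, but note that the paper itself offers no proof of this statement: it is one of three gluing theorems imported verbatim from pages 351--352 of \cite{BridHaef99} (as is the preceding Theorem 5), so there is no internal argument to compare against. What you do differently is supply a formal reduction of this two-space statement to the single-space Theorem 5 via $X = X_1 \sqcup X_2$; that is the standard observation, and what it buys is economy --- only one of the two gluing statements need be taken as a black box from \cite{BridHaef99}, the other following formally. The one point to tighten is the status of $X_1 \sqcup X_2$ as a ``metric space of curvature $\leq k$'': it is not one in the usual sense, so you should commit to the extended-metric reading (distance $\infty$ between the summands) and drop the vague ``or simply componentwise'' alternative. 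With that convention the argument does go through: curvature $\leq k$ is local, so it holds on the disjoint union; $A_1$ and $A_2$ are automatically disjoint, closed, complete, and locally convex in $X$; and the chain construction of the quotient pseudometric (page 65 of \cite{BridHaef99}) applied to the extended metric yields exactly the intended gluing metric, since any chain of finite total length can only pass between the two summands through identified pairs. By contrast, if one tried to make $X$ an honest metric space by installing some artificial finite cross-distance, the resulting quotient metric need not agree with the gluing metric in the statement, so that variant would not prove the theorem as stated. Your closing paragraph correctly locates the genuine analytic work --- the local identification of a neighborhood of a glued point with a Reshetnyak gluing of small convex balls, and the verification that the quotient metric is not strictly smaller there --- inside the proof of Theorem 5, which both you and the paper defer to \cite{BridHaef99}.
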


\begin{thm}
Let $X_1$ and $X_2$ be metric spaces of curvature $\leq k$, and for $i = 1,2$ let $A_i \subset X_i$ be a closed subspace that is locally convex and complete. Let $i \colon A_1 \to A_2$ be a local isometry that is a covering map, and suppose that for each $y \in A_2$ there exists $\epsilon >0$ such that $B(a, \epsilon)$ is CAT($k$) and $B(a,\epsilon) \cap B(a',\epsilon) = \emptyset$ for all distinct $a, a' \in i^{-1}(y).$ 

Then the quotient of $X_1 \sqcup X_2$ by the equivalence relation generated by $[a \sim i(a), \forall a \in A_1]$ has curvature $\leq k$. 
\end{thm}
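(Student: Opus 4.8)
The final theorem is a gluing statement in the spirit of the quotient theorems of \cite{BridHaef99}, and the plan is to deduce it from Theorem 6 --- the version in which the identifying map $i$ is \emph{bijective} --- by localising. Having curvature $\leq k$ is a local property, so it suffices to produce, for every point $p$ of the quotient $X := (X_1 \sqcup X_2)/\!\sim$, a neighbourhood that is CAT($k$) in its induced length metric. Write $\pi\colon X_1 \sqcup X_2 \to X$ for the quotient map and $\bar A := \pi(A_1) = \pi(A_2)$ for the seam. If $p = \pi(x)$ with $x \notin A_1 \cup A_2$, nothing is identified near $x$, so $\pi$ carries a small ball about $x$ isometrically onto a ball about $p$; shrinking, this ball is CAT($k$) because $X_1$ or $X_2$ has curvature $\leq k$. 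Hence the real work is at the seam.

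So fix $p \in \bar A$ and let $y$ be the unique point of $A_2$ over $p$, so that $\pi^{-1}(p) = \{y\} \sqcup i^{-1}(y)$. Choose $\epsilon$ as supplied by the hypothesis and then shrink it so that, in addition: $\bar B(y,\epsilon)$ and each $\bar B(a,\epsilon)$, $a \in i^{-1}(y)$, is a convex CAT($k$) ball (possible since $X_1,X_2$ have curvature $\leq k$; keep $\epsilon < \pi/2\sqrt{k}$ if $k>0$); by local convexity the ``subballs'' $\bar B(a,\epsilon)\cap A_1$ and $\bar B(y,\epsilon)\cap A_2$ are convex in $X_1$ and $X_2$ and agree with the corresponding intrinsic balls of $A_1, A_2$; and, since $i$ is a covering map and a local isometry, $i$ restricts to an isometry $\bar B(a,\epsilon)\cap A_1 \to \bar B(y,\epsilon)\cap A_2$ for each $a$, with
\[
i^{-1}\bigl(\bar B(y,\epsilon)\cap A_2\bigr) \;=\; \bigsqcup_{a \in i^{-1}(y)} \bigl(\bar B(a,\epsilon)\cap A_1\bigr),
\]
the splitting into a disjoint union being forced by the disjointness of the balls $B(a,\epsilon)$. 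The step we expect to be the main obstacle is the locality lemma: for all small enough $\delta$, the ball $B(p,\delta)\subset X$ is isometric to a neighbourhood of the image of $y$ in the glued space
\[
Z \;:=\; \Bigl(\textstyle\bigsqcup_{a\in i^{-1}(y)} \bar B(a,\epsilon)\Bigr)\;\sqcup\;\bar B(y,\epsilon)\ \Big/\ \bigl(z\sim i(z)\bigr),
\]
where the identification runs over $\bigsqcup_a\bigl(\bar B(a,\epsilon)\cap A_1\bigr)$ and $\bar B(y,\epsilon)\cap A_2$. This is the statement that the quotient-pseudometric construction is local: a short chain realising the distance between two points near $p$ can be taken inside the chosen balls and --- the only new point beyond the bijective case --- cannot pass from one sheet $\bar B(a,\epsilon)$ to another, precisely because those balls were arranged disjoint. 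We would prove it as in the bijective case, using the locality estimates for quotient pseudometrics in \cite{BridHaef99}.

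Granting the lemma, it remains to apply Theorem 6 to $Z$. Its first piece $X_1' := \bigsqcup_{a\in i^{-1}(y)} \bar B(a,\epsilon)$ is a complete metric space of curvature $\leq k$: each summand is a convex CAT($k$) ball, Theorem 6 does not require connectedness, and the summands lie in the disjoint balls $B(a,\epsilon)$ hence are pairwise a positive distance apart, so the disjoint union is complete. Its second piece $X_2' := \bar B(y,\epsilon)$ is CAT($k$). The subspaces $A_1' := \bigsqcup_a\bigl(\bar B(a,\epsilon)\cap A_1\bigr)$ and $A_2' := \bar B(y,\epsilon)\cap A_2$ are closed, complete and locally convex --- inherited from $A_1 \subset X_1$ and $A_2 \subset X_2$ --- and by the evenly covered property arranged above $i$ restricts to a \emph{bijective} local isometry $A_1' \to A_2'$. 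Hence Theorem 6 shows $Z$ has curvature $\leq k$, so the image of $y$ in $Z$, and therefore $p$, has a CAT($k$) neighbourhood. Since $p$ was an arbitrary point of $X$, this proves $X$ has curvature $\leq k$.
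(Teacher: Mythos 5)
First, a point of comparison: the paper does not prove this statement at all --- it is Theorem 7 there, quoted from pages 351--352 of \cite{BridHaef99} as background --- so there is no internal proof to measure you against; your localization strategy (reduce a local curvature statement to a gluing of small convex CAT($k$) balls) is indeed the natural one and is in the spirit of how Bridson--Haefliger treat such results. However, your reduction has a genuine error at its final step. You arrange that $i$ carries each sheet $\bar B(a,\epsilon)\cap A_1$ isometrically \emph{onto} $\bar B(y,\epsilon)\cap A_2$, and then claim that $i$ restricts to a \emph{bijective} local isometry $A_1' = \bigsqcup_{a\in i^{-1}(y)}\bigl(\bar B(a,\epsilon)\cap A_1\bigr) \to A_2' = \bar B(y,\epsilon)\cap A_2$. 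That map is $\lvert i^{-1}(y)\rvert$-to-one, so it is bijective only when the fiber is a single point --- exactly the case in which the covering-map theorem is not needed (and in the paper's application, $\varphi_e\colon S^1\to \tfrac{1}{|k_e|}S^1$, the fiber has $|k_e|$ points). Hence Theorem 6 does not apply to your local model $Z$ as stated: the local gluing is itself along a covering map, and your argument becomes circular at the point where it was supposed to close. The standard repair is to glue the sheets onto $\bar B(y,\epsilon)$ one at a time by a Reshetnyak-type two-space gluing along the common convex seam $A_2'$, verifying at each stage that the seam remains complete and (locally) convex in the partially glued space so the next gluing is legitimate, with a limiting argument (or a ``triangles see only finitely many sheets'' argument) if the fiber is infinite. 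None of this is in your write-up, and it is not cosmetic.

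Second, the ``locality lemma'' you flag as the main obstacle is precisely where the hypotheses --- disjointness of the balls $B(a,\epsilon)$, completeness and local convexity of the $A_i$, and the evenly-covered structure of $i$ --- must actually be used, and it is not a consequence of the bijective case, so deferring it with ``as in the bijective case'' leaves the analytic heart of the proof unproved. Moreover, the invariant you cite for it is not quite right: optimal chains \emph{can} pass from one sheet to another, namely through the seam portion of $\bar B(y,\epsilon)$, and the model $Z$ must (and does) allow this; what has to be shown is that chains realizing distances between points near $p$ can be confined to the chosen balls. That requires a quantitative excursion estimate (any chain leaving an $\epsilon'$-ball about $p$ has length at least $2(\epsilon'-\delta)$, which exceeds the trivial upper bound $2\delta$ for $\delta$ small), together with the verification that the $\pi$-preimage of a small ball about $p$ meets $X_1$ only inside the chosen sheets --- this last point is where the covering hypothesis and the disjointness of the fiber balls enter, and where your claim that $i$ maps the extrinsic ball $\bar B(a,\epsilon)\cap A_1$ isometrically onto $\bar B(y,\epsilon)\cap A_2$ also needs an honest argument (local convexity to identify intrinsic and extrinsic metrics on the $A_i$, plus a lifting argument using completeness), rather than an appeal to ``the evenly covered property.''
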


\begin{nb} By results on page 67 of \cite{BridHaef99}, if the spaces in question are geodesic metric spaces the quotients are geodesic metric spaces. Since nonpositively curved ($k \leq 0$) geodesic metric spaces are assumed to be complete, closed subspaces of of nonpositvely curved geodesic spaces are automatically complete.
\end{nb}

With all the tools in place, we can proceed with our constructions; we begin with the most basic. 

\section{The Warped Mapping Spiral}

Let $X$ and $Y$ be topological spaces; let $f \colon X \to Y$ be continuous. Define the extended mapping cylinder of $f$ to be the space $$C(f) = \frac{X \times [0,\onehalf] \cup Y \times [\threehalfs,2]}{((x,\onehalf) \sim (f(x),\threehalfs))}$$

Let $p \colon X \times [0,\onehalf] \cup Y \times [\threehalfs,2] \to C(f)$ be the quotient projection; the boundary $\partial C(f)$ of $C(f)$ is the union of the disjoint subspaces $X_0 = p(X \times 0)$ and $Y_1 = p(Y \times 2)$. For $\epsilon > 0$ small enough, $p$ maps $X \times [0,\epsilon)$ homeomorphically onto it's image since no identifications occur on $X \times [0, \epsilon)$ nor in a neighborhood of $X \times [0,\epsilon]$. The same is true for $Y \times (2 - \epsilon, 2]$. We call the open subsets $E_0 = p(X \times [0,\epsilon))$ and $E_1 = p(Y \times (2 - \epsilon, 2])$ collars of $C(f)$. The subspace $E_0 \cup E_1$ is homeomorphically a product of $\partial C(f)$ with a half-open interval.

\begin{figure}[h]
\centering
\caption*{C(f)}

\begin{tikzpicture}
\draw (0,0) node (A) {};
\draw (0,1) node (B) {};
\draw (2,1) node (C) {};
\draw (2,0) node (D) {};
\draw (A.center) edge node[right] {$f$} (B.center);
\draw (B.center) -- (C.center);
\draw (C.center) edge node[right] {$Y$} (D.center);
\draw (D.center) -- (A.center);

\draw (-2,0.1) node (E) {};
\draw (-2,0.9) node (F) {};
\draw (0,0.9) node (G) {};
\draw (0,0.1) node (H) {};
\draw (F.center) -- (G.center);
\draw (H.center) -- (E.center);
\draw plot [smooth] coordinates {(-2,0.1) (-2.05, 0.3) (-1.95, 0.6) (-2, 0.9)};
\draw (-2, 0.5) node[left] {$X$};
\end{tikzpicture}

\end{figure}

\begin{nb} If $X = Y,$ $C(f)$ is almost a torus; $C(f)/(X_0 \sim_{id} X_1)$ is homeomorphic to $T(f)$, the mapping torus of $f \colon X \to X.$ \end{nb} 

Suppose $X$ and $Y$ are metric spaces. A metric on $C(f)$ is said to have \textit{straight collars} (with respect to the metrics on $X$ and $Y$) if $p|_{X \times [0,\epsilon)} \colon X \times [0,\epsilon) \to E_0$ and $p|_{Y \times (2-\epsilon, 2]} \colon Y \times (2-\epsilon, 2] \to E_1$ are local isometries when $X \times [0,\epsilon)$ and $Y \times (2-\epsilon, 2]$ are given Euclidean metrics.

Our goal is to build nonpositively curved spaces. As such, we wish for not just straight collars, but also nonpositive curvature. This leads us to our core definition. Let $X$ and $Y$ be nonpositively curved geodesic metric spaces; let $f \colon X \to Y$ be continuous. If there exists a nonpositively curved geodesic metric on $C(f)$ with straight collars, we say that $f$ is a \textit{nonpositively curved gluing}. The importance of straight collars in this definition lies in the following proposition.

\begin{prpn}
If $f:X \to Y$ is a nonpositively curved gluing between complete, geodesic metric spaces, then $X_0$ and $Y_1$ are closed, locally convex subspaces of $C(f)$. 
\end{prpn}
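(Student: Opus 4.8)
The plan is to treat closedness and local convexity separately, the latter carrying essentially all the content; the tool throughout is the straight‑collar structure. Fix the collar $E_0 = p(X\times[0,\epsilon))$: it is an open subset of $C(f)$ onto which $p$ restricts to a homeomorphism that, by the straight‑collar hypothesis, is a local isometry when $X\times[0,\epsilon)$ carries the Euclidean product metric; in particular $X_0 = p(X\times\{0\})\subseteq E_0$. Closedness is then routine: the metric on $C(f)$ induces the quotient topology and $p^{-1}(X_0) = X\times\{0\}$ is closed, so $X_0$ is closed (this can also be read off the collar chart directly). Everything below has a mirror‑image version for $Y_1$, obtained from the collar $E_1 = p(Y\times(2-\epsilon,2])$; so I discuss only $X_0$, and only its local convexity.

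Fix $x_0 = p(x,0)\in X_0$. Since $X$ is nonpositively curved, sufficiently small metric balls of $X$ are convex in $X$; pick $\delta>0$ small enough that $B_X(x,\delta)$ is convex in $X$ and that $B_{C(f)}(x_0,3\delta)\subseteq E_0$ — possible since $E_0$ is an open neighborhood of $x_0$. Put $U = p\bigl(B_X(x,\delta)\times\{0\}\bigr)$, an open neighborhood of $x_0$ in $X_0$; as such $U$ cover $X_0$, it suffices to show each is convex in $C(f)$. Let $a_0 = p(a,0)$ and $b_0 = p(b,0)$ lie in $U$ (so $a,b\in B_X(x,\delta)$), and let $\sigma$ be a geodesic in $C(f)$ from $a_0$ to $b_0$. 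Taking a minimizing geodesic $\gamma$ in $X$ from $a$ to $b$, the path $t\mapsto p(\gamma(t),0)$ lies in $E_0$ and, as $p|_{E_0}$ is a local isometry (hence length‑preserving), has $C(f)$‑length $l_X(\gamma)=d_X(a,b)$; so $d_{C(f)}(a_0,b_0)\le d_X(a,b)<2\delta$. Hence $l_{C(f)}(\sigma)<2\delta$, every point of $\sigma$ is within $2\delta$ of $a_0$, and $a_0$ is within $\delta$ of $x_0$ by the same estimate, so $\sigma\subseteq B_{C(f)}(x_0,3\delta)\subseteq E_0$.

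Now pull $\sigma$ back through the homeomorphism $p|_{X\times[0,\epsilon)}$ to a path $\tilde\sigma=(\tilde\gamma,\tilde c)\colon[0,1]\to X\times[0,\epsilon)$ from $(a,0)$ to $(b,0)$. Length‑preservation gives $l_{\mathrm{Eucl}}(\tilde\sigma)=l_{C(f)}(\sigma)=d_{C(f)}(a_0,b_0)\le d_X(a,b)$. On the other hand, Minkowski's inequality applied to the sums defining the Euclidean product length (and passing to the limit) gives $l_{\mathrm{Eucl}}(\tilde\sigma)\ge\sqrt{l_X(\tilde\gamma)^2+l(\tilde c)^2}\ge\sqrt{d_X(a,b)^2+l(\tilde c)^2}$, the last step because $\tilde\gamma$ joins $a$ to $b$. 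Comparing the two estimates forces $l(\tilde c)=0$ and $l_X(\tilde\gamma)=d_X(a,b)$: so $\tilde c$ is the constant path at $\tilde c(0)=0$, whence $\tilde\sigma$ lies in $X\times\{0\}$, and $\tilde\gamma$ is a minimizing geodesic in $X$ from $a$ to $b$, hence lies in the convex ball $B_X(x,\delta)$. Therefore $\sigma=p\circ\tilde\sigma\subseteq p\bigl(B_X(x,\delta)\times\{0\}\bigr)=U$, so $U$ is convex. (One could instead invoke Chen's Lemma~1 to see that $\tilde\gamma$ is an $X$‑geodesic, the same Minkowski estimate then giving $\tilde c\equiv 0$.)

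The step I expect to be the main obstacle is confining $\sigma$ to the collar. Since $E_0$ is only \emph{locally} isometrically embedded in $C(f)$, a geodesic joining two points of $X_0$ could a priori leave $E_0$ — cutting through the interior of $C(f)$, or around the $Y$‑side — and outside the collar there is no product structure to exploit. This is prevented exactly by the a priori bound $d_{C(f)}(a_0,b_0)\le d_X(a,b)$ together with choosing $\delta$ small relative to $E_0$, which traps $\sigma$ in the collar; from there the metric product $X\times[0,\epsilon)$ does the rest. Note that the argument uses only that $X$ and $Y$ are nonpositively curved and that $C(f)$ admits \emph{straight collars}; nonpositive curvature of $C(f)$ itself does not enter.
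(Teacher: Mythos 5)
Your proposal is correct and takes essentially the same route as the paper: closedness from the saturated collar/quotient structure, and local convexity by using the straight collar so that a small neighborhood of a point of $X_0$ sits inside the Euclidean product $X \times [0,\epsilon)$, where slices of small convex balls of $X$ are convex. The paper's own proof is terser --- it simply chooses $r$ so that $p$ maps $B_r(p^{-1}(x))$ isometrically onto its image and asserts convexity of the image of its slice --- so your additional steps (trapping the geodesic in the collar via $d_{C(f)}(a_0,b_0)\le d_X(a,b)$ and the Minkowski estimate forcing the height component to be constant) supply exactly the details the paper leaves implicit.
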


\begin{proof}
$X \times \{ 0 \} \subset X \times [0, \epsilon)$ is closed and $p$ maps $X \times [0,\epsilon)$ homeomorphically, so $X_0 = p(X \times \{ 0 \})$ is closed. Let $x \in X_0$ and choose $r < \epsilon$ be a positive radius small enough that $p$ maps $B_r(p^{-1}(x))$ isometrically onto its image. Since $X \times [0, \epsilon)$ has the euclidean metric, $B_r(p^{-1}(x)) \cap X \times \{ 0 \}$ is convex. Since $p$ maps $B_r(p^{-1}(x))$ isometrically, $p(B_r(p^{-1}(x)) \cap X \times \{ 0 \})$ is convex , and this is an open neighborhood of $x$ in $X_0$. Therefore, $X_0$ is locally convex. The proof is similar for $Y_1.$   
\end{proof}

We can now proceed with the simplest version of our construction, the Warped Mapping Spiral.

\begin{thm}[Warped Mapping Spiral construction]
Let $X$ be a nonpositively curved, complete geodesic metric space, and let $\lambda > 0.$ If $f \colon \lambda X \to X$ is a nonpositively curved gluing, then there is a nonpositively curved metric on the space\[
Y = \frac{C(f) \times_\lambda \reals}{[(x,t)] \sim [(x,t+1)] \text{ for } [(x,t)] \in X_0 \times_\lambda \reals}\] 
which is homotopy equivalent to $T(f)$. 
\end{thm}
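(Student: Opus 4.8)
The plan is to realize $Y$ as a gluing of the single nonpositively curved space $C(f)\times_\lambda\reals$ along two disjoint boundary subspaces and then invoke Theorem 5; Lemma 2 is exactly the tool that makes this gluing isometric. Since $f$ is a nonpositively curved gluing, $C(f)$ carries a complete, nonpositively curved geodesic metric with straight collars. The warping function $f_\lambda(t)=\lambda^t$ is positive and convex, so Theorems 3 and 4 of \cite{Chen99} show that $C(f)\times_\lambda\reals$ is a nonpositively curved geodesic metric space. By Proposition 1, $X_0$ and $Y_1$ are closed, locally convex subspaces of $C(f)$, and they are disjoint because the mapping-cylinder identifications in $C(f)$ take place away from levels $0$ and $2$. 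Consequently, by Corollary 1 and the remark following it, $X_0\times_\lambda\reals$ and $Y_1\times_\lambda\reals$ are closed, locally convex, and disjoint subspaces of $C(f)\times_\lambda\reals$, and they are complete by the Note after Theorem 7.

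The key step is to produce a gluing isometry between these two subspaces. The straight-collar condition says that, via $p$, a neighborhood of $X_0$ in $C(f)$ is locally isometric to the Euclidean product $\lambda X\times[0,\epsilon)$ (recall that the domain of $f$ is $\lambda X$) and a neighborhood of $Y_1$ is locally isometric to $X\times(2-\epsilon,2]$. Combining this with Corollary 1 shows that the \emph{subspace} metric makes $X_0\times_\lambda\reals$ locally isometric to the warped product $\lambda X\times_\lambda\reals$ and $Y_1\times_\lambda\reals$ locally isometric to $X\times_\lambda\reals$. Now Lemma 2 gives an isometry $\lambda X\times_\lambda\reals\to X\times_\lambda\reals$, $(x,t)\mapsto(x,t+1)$; composing it with the local identifications above produces a bijective local isometry $i\colon X_0\times_\lambda\reals\to Y_1\times_\lambda\reals$ which on underlying points sends $(x,t)$ to $(x,t+1)$ --- this is precisely the identification in the statement, where $(x,t+1)$ is read as the corresponding point of $Y_1\times_\lambda\reals$ under the common underlying set $X$ of $X_0$ and $Y_1$. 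Note that $f$ itself need \emph{not} be a local isometry: the metric discrepancy between $\lambda X$ and $X$ is absorbed by the warping, which is the whole reason for introducing $\lambda$, and the same discrepancy forces the shift by $1$.

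Applying Theorem 5 with ambient space $C(f)\times_\lambda\reals$, subspaces $A_1=X_0\times_\lambda\reals$ and $A_2=Y_1\times_\lambda\reals$, and the isometry $i$, the quotient has curvature $\le 0$; by the Note after Theorem 7 it is geodesic, and since a complete space has been glued along complete subspaces it is complete (cf.\ pages 65--67 of \cite{BridHaef99}), hence nonpositively curved. This quotient is $Y$. For the homotopy type, forget the metric: $Y$ is obtained from $C(f)\times\reals$ by gluing its disjoint subspaces $X_0\times\reals$ and $Y_1\times\reals$ via $(x,t)\mapsto(x,t+1)$, so $Y$ is the homotopy coequalizer of the two maps $X_0\times\reals\rightrightarrows C(f)\times\reals$ given by the inclusion of $X_0\times\reals$ and by the composite $(x,t)\mapsto(x,t+1)\in Y_1\times\reals$ followed by inclusion. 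Deformation retracting $C(f)\times\reals$ onto $C(f)$ and then $C(f)$ onto its codomain $Y_1\cong X$ --- a retraction that restricts to $f$ on $X_0$ and to the identity on $Y_1$ --- turns this into the homotopy coequalizer of $f,\mathrm{id}\colon X\rightrightarrows X$, which is the mapping torus $T(f)$. Hence $Y\simeq T(f)$.

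The main obstacle should be the construction of the gluing isometry $i$: nailing down the intrinsic geometry of the boundary subspaces from the straight collars together with local convexity, recognizing that Lemma 2 provides exactly the required bijective local isometry, and in particular that the correct gluing is of $X_0\times_\lambda\reals$ to $Y_1\times_\lambda\reals$ with a shift by $1$ (a naive self-identification of $X_0\times_\lambda\reals$ would not be an isometric gluing when $\lambda\neq1$). The remaining ingredients --- the nonpositive curvature and geodesicity of the warped product, the local convexity of the boundary subspaces, and the curvature of the quotient --- are direct applications of Chen's theorems, Corollary 1, and the Bridson--Haefliger gluing theorem.
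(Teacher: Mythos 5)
Your proposal is correct and follows essentially the same route as the paper: warp $C(f)$ by $\lambda$, use Lemma 2 (via the straight collars) to obtain the shift isometry between the two boundary cylinders, verify they are closed, disjoint, and locally convex via Proposition 1 and Corollary 1, and apply Theorem 5 to conclude nonpositive curvature of the quotient. Your homotopy-type argument via the homotopy coequalizer of $f$ and the identity is only a cosmetic variant of the paper's comparison with the identity gluing, which yields $T(f)\times\reals\simeq T(f)$.
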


\begin{proof}
Give $C(f)$ a nonpositively curved metric with straight collars; we write $\partial C(f) = X_0 \cup X_1.$ 

Consider the space $ C(f) \times_{\lambda} \reals$. $X_0 \times_\lambda \reals$ is isometric to $\lambda X \times_\lambda \reals;$ $X_1 \times_\lambda \reals$ is isometric to $X \times_\lambda \reals.$ By lemma 2, the map $i \colon X_0 \times_\lambda \reals \to X_1 \times_\lambda \reals$, $[(x,t)] \mapsto [(x,t+1)]$ is an isometry. By corollary 1, since $X_0$ and $X_1$ are locally convex subspaces of $C(f)$, $X_0 \times_\lambda \reals$ and $X_1 \times_\lambda \reals$ are locally convex subspaces of $C(f) \times_\lambda \reals$. By theorem 4, $C(f) \times_\lambda \reals$ is nonpositively curved. By theorem 5, since $X_0 \times_\lambda \reals$ and $X_1 \times_\lambda \reals$ are disjoint, closed and locally convex, and since $i$ is an isometry,  the quotient space \[
Y = \frac{C(f) \times_\lambda \reals}{[(x,t)] \sim [(x,t+1)] \text{ for } [(x,t)] \in X_0 \times_\lambda \reals}\] is nonpositively curved.

It can be easily seen that the map $(x,t) \to (x,t+1)$ is homotopic to the identity on $X \times \reals$. The homotopy type of $Y$ depends only on the homotopy type of $(x,t) \to (x,t+1)$, and thus $Y$ is homotopy equivalent to the space $C(f) \times \reals / (X_0 \times \reals \sim_{id} X_1 \times \reals)$. As we have noted, this space is homeomorphic to $T(f) \times \reals$ and therefore is homotopy equivalent to $T(f)$.

\end{proof}

\section{Nonpositively Curved Gluings}

Our construction relies on our definition of a nonpositively curved gluing. The definiton itself is maleable inasmuchas it only depends on the curvature of $C(f)$ and the specific geometry of a small collar around $\partial C(f).$ The question, then, is what kinds of maps are nonpositively curved gluings. We have a basic result which follow directly from the gluing theorems of \cite{BridHaef99} we cited previously.

\begin{thm}
Let $X$ and $Y$ be nonpositively curved geodesic metric spaces; let $f \colon X \to Y$ be a continuous map. Then $f$ is a nonpositively curved gluing if either of the following hold:

\begin{enumerate}
\item $f$ is a bijective local isometry.

\item $f$ is a covering map, and given $y \in Y$ there is $\epsilon > 0$ such that $B(x,\epsilon)$ is nonpositively curved and $B(x,\epsilon) \cap B(x',\epsilon) = \emptyset$ for all distinct $x,x' \in f^{-1}(y).$
\end{enumerate}

\end{thm}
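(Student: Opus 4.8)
The plan is to produce, in each case, an explicit nonpositively curved metric on $C(f)$ with straight collars by exhibiting $C(f)$ as one of the Bridson--Haefliger gluings of Theorems 6 and 7. Set $X_1 = X \times [0,\onehalf]$ and $X_2 = Y \times [\threehalfs, 2]$, each with its Euclidean product metric, and put $A_1 = X \times \{\onehalf\} \subset X_1$ and $A_2 = Y \times \{\threehalfs\} \subset X_2$. Then $f$ induces a map $\bar f \colon A_1 \to A_2$, $(x,\onehalf) \mapsto (f(x),\threehalfs)$, and the quotient of $X_1 \sqcup X_2$ by the relation $[a \sim \bar f(a)]$ is, by construction, $C(f)$. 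So everything reduces to checking that this quotient metric is nonpositively curved and has straight collars.

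First I would verify the hypotheses of the gluing theorems for the data $(X_1, X_2, A_1, A_2, \bar f)$. Theorem 4 applied with the constant (hence convex) function $1$ shows that the Euclidean product $X \times \reals = X \times_1 \reals$ is nonpositively curved; since the $\reals$-coordinate of a geodesic in this product is affine, the slab $X \times [0,\onehalf]$ is a closed, convex subspace, hence itself nonpositively curved, so $X_1$ is nonpositively curved, and similarly $X_2$. The same affine-coordinate fact shows that each slice $X \times \{t\}$ --- in particular $A_1$ --- is convex, hence locally convex, in $X_1$; it is closed, and being closed in a complete space it is complete; likewise $A_2 \subset X_2$. In case (1) the slices carry exactly the metrics of $X$ and $Y$, so $\bar f$ is a bijective local isometry and Theorem 6 gives that $C(f)$ has curvature $\leq 0$. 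In case (2), $\bar f$ is a covering map that is also a local isometry, and the $\epsilon$-ball hypothesis transfers verbatim from $X$ to $A_1$ --- since $X$ is nonpositively curved, the clause that $B(x,\epsilon)$ be nonpositively curved is automatic for small $\epsilon$, so the real content is the disjointness of the balls over a fibre --- hence Theorem 7 gives that $C(f)$ has curvature $\leq 0$. In both cases the quotient is a geodesic metric space by the Note following the gluing theorems, and as a gluing of complete spaces along closed, complete subspaces it is complete; so $C(f)$ is nonpositively curved.

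It remains to check straight collars. Fix $\epsilon < \tfrac{1}{4}$, small enough that $p$ maps $X \times [0,\epsilon)$ homeomorphically onto $E_0$. The key estimate is that any chain realising the quotient pseudometric between two points of $X \times [0,\epsilon)$ which uses the identification at all has total length at least $2(\onehalf - \epsilon) = 1 - 2\epsilon$, since it must travel from $t$-level below $\epsilon$ down to the seam $t = \onehalf$ and back; therefore for $q', q''$ in the open set $V_q = \{\, z \in X \times [0,\epsilon) : d_{X_1}(q,z) < \onehalf(1-2\epsilon) \,\}$, the quotient distance between $p(q')$ and $p(q'')$ equals $d_{X_1}(q',q'')$, which is the Euclidean product distance. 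Hence $p|_{X \times [0,\epsilon)}$ is a local isometry for the Euclidean metric, and the mirror argument does the same for $p|_{Y \times (2-\epsilon,2]}$. So $C(f)$ carries a nonpositively curved metric with straight collars, i.e. $f$ is a nonpositively curved gluing; by Proposition 1 this also delivers the local convexity of the collar slices used in the later constructions.

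The main obstacle is this last step: making precise that the identification performed deep inside $C(f)$ leaves the quotient metric unperturbed near $\partial C(f)$. The remaining items --- completeness of the glued space and the closedness, completeness and local convexity of $A_1$ and $A_2$ --- are routine. One should also be attentive in case (2) that ``covering map'' here must be read metrically, as a covering map that is a local isometry, since that is what Theorem 7 requires.
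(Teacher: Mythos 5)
Your proposal follows essentially the same route as the paper: glue the Euclidean product slabs $X \times [0,\onehalf]$ and $Y \times [\threehalfs,2]$ along the convex slices $X\times\{\onehalf\}$, $Y\times\{\threehalfs\}$ via the map induced by $f$, invoke Theorems 6 and 7, and note the resulting quotient metric has straight collars. Your chain-length estimate for the collars (and your remark that ``covering map'' in case (2) must be read as a locally isometric covering) only fills in details the paper treats as immediate, so the argument is correct and not a genuinely different approach.
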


\begin{proof}
Since $X$ and $Y$ are nonpositively curved, $X \times [0,\onehalf] \cup Y \times [\threehalfs,2]$ is nonpositively curved with the Euclidean product metric; $X \times \onehalf$ and $Y \times \threehalfs$ are convex (and locally convex) subspaces of $X \times [0,\onehalf] \cup Y \times [\threehalfs,2]$. 

In case 1, since $f$ is an bijective local isometry, so is the map $(x, \onehalf) \mapsto (f(x), \threehalfs)$. By theorem 6, then, $C(f)$ with the quotient metric is a nonpositively curved geodesic metric space. Since this metric is a quotient metric of the Euclidean product metric, it clearly has straight collars.

Cases 2 is similar applying theorem 7. 
\end{proof}

This gives us an family of maps which are nonpositively curved gluings, but being direct quotients of the product metric on $X \times [0,\onehalf] \cup Y \times [\threehalfs,2]$, we expect this is hardly the full extent. There is plenty of room left to explore a wide variety of geometric constructions which yield nonpositively curved gluings. For example, we have found that certain train track maps and higher dimensional analogues are non-postively curved gluings $\lambda X \to X$ for some $\lambda > 0$. We conjecture that all free group endomorphisms can be realized by compositions of nonpostively curved gluings $\lambda X \to X$.

\section{Multi-Warped Graphs of spaces}

\subsection{Graphs of Spaces}

Recall that a graph of groups is an algebraic structure associated to a connected, oriented graph $G = (E,V).$ Let $\partial \colon E \to V$ be the map which associates to each oriented edge its origin, and let $\bar{e} \in E$ be the edge $e$ with its orientation reversed. A graph of groups is an assignment $\mathcal{G}$ which assigns to each vertex $v \in V$ a group $A_v$ and assigns to each oriented edge $e \in E$ a pair $(B_e, \phi_e \colon B_e \to A_{\partial(e)})$ where $B_e = B_{\bar{e}}$ is a group and $\phi_e$ is a monomorphism. The fundamental group $\pi_1(\mathcal{G})$ of $\mathcal{G}$ as described by Serre in \cite{Serre80} is a quotient of the free product $<A_v, t_e>_{v \in V, e \in E};$ $\pi_1(\mathcal{G})$ is quotiented by these three relations defined in terms of a spanning tree $T \subset G$
\begin{itemize}
\item $\forall e \in E, t_{\bar{e}} = t_{e}^{-1},$ and
\item $\forall e \in E, \forall b \in B_{e} = B_{\bar{e}}, t_e \phi_e(b) t_{e}^{-1} = \phi_{\bar{e}}(b).$
\item $\forall e \in T, t_e = 1.$
\end{itemize}

As one can build a graph of groups, one may also build a graph of spaces, an assignment $\mathcal{X}$ which assigns to each vertex $v \in V$ a topological space $X_v$ and to each edge $e \in E$ a pair $(Y_e, \varphi_e \colon Y_e \to X_{\partial(e)})$ where $Y_e = Y_{\bar{e}}$ is a topological space and $\varphi_e$ is a continuous map. From $\mathcal{X}$ one builds the "total space" $\Gamma(\mathcal{X})$, a quotient of the disjoint union $(\sqcup_{v \in V} X_v )\sqcup (\sqcup_{e \in E} Y_e \times [0,1])$ by the relation $\sim$ defined by 

\begin{itemize}
\item $\forall e \in E, \forall y \in Y_e = Y_{\bar{e}}, Y_e \times [0,1] \ni (y, t) \sim (y, 1-t) \in Y_{\bar{e}} \times [0,1],$ and
\item $\forall e \in E, \forall y \in Y_e, Y_e \times [0,1] \ni (y, 0) \sim \varphi_e(y) \in X_{\partial(e)}.$
\end{itemize}

Let $\mathcal{X}$ be a graph of connected spaces and let $\mathcal{G}$ be a graph of groups. If each $\pi_1(X_v) = A_v$, each $\pi_1(Y_e) = B_e$, and each $(\varphi_e)_{*} = \phi_e,$ then, as shown in \cite{ScottWall79}, $\pi_1(\Gamma(\mathcal{X})) = \pi_1(\mathcal{G}).$ Therefore, given a group $A$ realized as the fundamental group of a graph of groups, the space $\Gamma(\mathcal{X})$ gives us a method to realize the group $A$ topologically.

Consider a pair of continuous maps $\varphi_0 \colon Y \to X_0, \varphi_1 \colon Y \to X_1$ between topological spaces. The \textit{two-sided mapping cylinder} for this pair is
\[ \frac{X_0 \sqcup (Y \times [0,1]) \sqcup X_1}{\forall y \in Y, (y,0) \sim \varphi_0(y), (y,1) \sim \varphi_1(y)} \] 

\begin{center}

\begin{tikzpicture}
\draw (-1,1) node (A) {};
\draw (0, 1) node (B) {};
\draw (1, 1) node (C) {};
\draw (-1,0) node (D) {};
\draw (0,0) node (E) {};
\draw (1,0) node (F) {};

\draw (A.center) -- (B.center) -- (C.center);
\draw (E.center) edge[dashed] node[right] {$Y$} (B.center);
\draw (D.center) -- (E.center) -- (F.center);

\draw plot [smooth] coordinates {((-1, -0.1) (-1, 0) (-1.05, 0.3) (-0.95, 0.7) (-1,1) (-1, 1.1)};
\draw (-1, 0.5) node[left] {$X_0$};

\draw plot [smooth] coordinates {((1, -0.1) (1, 0) (1.05, 0.3) (0.95, 0.7) (1,1) (1, 1.1)};
\draw (1, 0.5) node[right] {$X_1$};

\end{tikzpicture}

\end{center}

This cylinder is a deformation retract of the \textit{modified, two-sided cylinder}

\[ \frac{(X_0 \times [0,1]) \sqcup (Y \times [0,\onehalf]) \sqcup ([\onehalf, 1] \times Y) \sqcup (X_1 \times [0,1])}{\forall y \in Y, (y,0) \sim (\varphi_0(y), 1), (y, \onehalf) \sim (\onehalf, y), (1,y) \sim (\varphi_1(y), 0)}. \]

\begin{center}

\begin{tikzpicture}
\draw (-1,1) node (A) {};
\draw (0, 1) node (B) {};
\draw (1, 1) node (C) {};
\draw (-1,0) node (D) {};
\draw (0,0) node (E) {};
\draw (1,0) node (F) {};

\draw (A.center) -- (B.center) -- (C.center);
\draw (E.center) edge node[right] {$Y$} (B.center);
\draw (D.center) -- (E.center) -- (F.center);

\draw plot [smooth] coordinates {((-1, -0.1) (-1, 0) (-1.05, 0.3) (-0.95, 0.7) (-1,1) (-1, 1.1)};

\draw (-1, -0.1) -- (-2, -0.1);
\draw (-1, 1.1) -- (-2, 1.1);

\draw plot [smooth] coordinates {((-2, -0.1) (-2, 0) (-2.05, 0.3) (-1.95, 0.7) (-2,1) (-2, 1.1)};
\draw (-2, 0.5) node[left] {$X_0$};

\draw plot [smooth] coordinates {((1, -0.1) (1, 0) (1.05, 0.3) (0.95, 0.7) (1,1) (1, 1.1)};
\draw (1, -0.1) -- (2, -0.1);
\draw (1, 1.1) -- (2, 1.1);

\draw plot [smooth] coordinates {((2, -0.1) (2, 0) (2.05, 0.3) (1.95, 0.7) (2,1) (2, 1.1)};
\draw (2, 0.5) node[right] {$X_1$};

\end{tikzpicture}

\end{center}

Notice that the space $(X_0 \times [0,1]) \sqcup (Y \times [0,\onehalf]) / ((y,0) \sim (\varphi_0(y), 1))$ is homeomorphic to the extended mapping cylinder $C(\varphi_0)$; $([\onehalf, 1] \times Y) \sqcup (X_1 \times [0,1]) / (1,y) \sim (\varphi_1(y), 0)$ is homeomorphic to $C(\varphi_1).$ Our modified, two-sided cylinder can therefore be considered as a quotient of the disjoint union $C(\varphi_0) \sqcup C(\varphi_1).$ Let $Y_{\varphi_0} \subset C(\varphi_0)$ be the copy of $Y_0$ in $C(\varphi_0)$, and let $Y_{\varphi_1} \subset C(\varphi_1)$ be the copy of $Y_0$ in $C(\varphi_1)$. (Here, $Y_0$ is the ``$Y \times \{ 0 \}$"  subspace of the extended mapping cylinder $C(\varphi_i)$ as defined in section 3.) Letting $C(\varphi_0, \varphi_1)$ be $C(\varphi_0) \sqcup C(\varphi_1) / (Y_{\varphi_0} \ni [(y,0)] \sim [(y,0)] \in Y_{\varphi_1})$ we have that

\[ C(\varphi_0,\varphi_1) \simeq \frac{X_0  \sqcup (Y \times [0,1]) \sqcup X_1 }{\forall y \in Y, (y,0) \sim \varphi_0(y), (y,1) \sim \varphi_1(y)}.\] This homotopy equivalence is a deformation retraction.

Let $\mathcal{X}$ be a graph of spaces associated to a connected, oriented graph $G = (E,V).$ Let $O \subset E$ be a choice of orientation for $G$ i.e. for every $e \in E$, exactly one of $e$ or $\bar{e}$ is in $O.$ Let $X_e$ be the quotient image of $X_{\partial(e)} \times \{2 \}$ in $C(\varphi_e).$ Since the quotient relation of $C(\varphi_e,\varphi_{\bar{e}})$ only occurs on $Y_{\varphi_e},$ $X_e$ is also a subspace of $C(\varphi_e, \varphi_{\bar{e}})$. Similarly for define $X_{\bar{e}} \subset C(\varphi_e, \varphi_{\bar{e}}).$ Replacing each $Y_e \times [0,1]$ in $\Gamma( \mathcal{X})$ by $C(\varphi_e, \varphi_{\bar{e}})$, we define a new realization of the graph of spaces 

\[ \bar{\Gamma}(\mathcal{X}) = 
\frac{ (\sqcup_{v \in V} X_v) \sqcup (\sqcup_{e \in O} C(\varphi_e,\varphi_{\bar{e}})) }
{X_e \ni x \sim x \in X_{\partial(e)} \text{ and } X_{\bar{e}} \ni x \sim x \in X_{\partial(\bar{e})}}. \]

\begin{lemma}
$\bar{\Gamma}(\mathcal{X}) \simeq \Gamma(\mathcal{X})$
\end{lemma}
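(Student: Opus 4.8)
The plan is to exhibit explicit maps $R\colon\bar\Gamma(\mathcal{X})\to\Gamma(\mathcal{X})$ and $S\colon\Gamma(\mathcal{X})\to\bar\Gamma(\mathcal{X})$, check that $R\circ S$ is the identity on the nose, and assemble a homotopy $S\circ R\simeq\mathrm{id}$ edge by edge out of the deformation retractions already discussed. First I would normalize the description of $\Gamma(\mathcal{X})$: since $(y,t)\sim(y,1-t)$ identifies $Y_e\times[0,1]$ with $Y_{\bar e}\times[0,1]$, we may write $\Gamma(\mathcal{X})=\bigl((\sqcup_{v\in V}X_v)\sqcup(\sqcup_{e\in O}Y_e\times[0,1])\bigr)/\!\sim$ with the surviving relations $(y,0)\sim\varphi_e(y)$ and $(y,1)\sim\varphi_{\bar e}(y)$. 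For each $e\in O$ let $D_e$ denote the two-sided mapping cylinder of $(\varphi_e,\varphi_{\bar e})$; by the deformation retraction established above there are maps $\rho_e\colon C(\varphi_e,\varphi_{\bar e})\to D_e$ and $\iota_e\colon D_e\hookrightarrow C(\varphi_e,\varphi_{\bar e})$ with $\rho_e\iota_e=\mathrm{id}_{D_e}$ and a homotopy $H_e$ from $\mathrm{id}$ to $\iota_e\rho_e$. The key point I would verify carefully is that $\rho_e$ can be chosen to be the identity on $X_e\cup X_{\bar e}$ and $H_e$ to be stationary there: collapsing an extended mapping cylinder slides the points of the block $X_{\partial(e)}\times[\threehalfs,2]$ toward the face $X_{\partial(e)}\times\{2\}=X_e$ and never moves that face (likewise for $X_{\bar e}$), and under $\rho_e,\iota_e$ these faces are identified with the two boundary copies of $X_{\partial(e)}$ and $X_{\partial(\bar e)}$ inside $D_e$.

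Next I would define $R$ to be the identity on each $X_v$ and, on each $C(\varphi_e,\varphi_{\bar e})$, the composite $C(\varphi_e,\varphi_{\bar e})\xrightarrow{\rho_e}D_e\to\Gamma(\mathcal{X})$, where the last arrow is the canonical map sending the $Y_e\times[0,1]$ factor of $D_e$ onto the edge-$e$ part of $\Gamma(\mathcal{X})$ and the two boundary copies of vertex spaces onto $X_{\partial(e)}$ and $X_{\partial(\bar e)}$. Symmetrically, $S$ is the identity on each $X_v$ and, on each $Y_e\times[0,1]$, the composite $Y_e\times[0,1]\hookrightarrow D_e\xrightarrow{\iota_e}C(\varphi_e,\varphi_{\bar e})\hookrightarrow\bar\Gamma(\mathcal{X})$. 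Both prescriptions must be checked to respect the defining relations; the only nonformal verification is on the subspace glued by $X_e\sim X_{\partial(e)}$ (and its $\bar e$ counterpart), where the two recipes coincide precisely because $\rho_e$ and $\iota_e$ restrict to the canonical identification of $X_e$ with the copy of $X_{\partial(e)}$ sitting inside $D_e$. Since $\bar\Gamma(\mathcal{X})$ and $\Gamma(\mathcal{X})$ are finite unions of the closed pieces involved and the local prescriptions agree on overlaps, the pasting lemma gives continuity of $R$ and $S$.

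Finally I would check the composites. One gets $R\circ S=\mathrm{id}_{\Gamma(\mathcal{X})}$ immediately, since $\rho_e\iota_e=\mathrm{id}_{D_e}$ and both maps are the identity on the vertex spaces. For the other composite, glue the homotopies $H_e$ (followed by the inclusions $C(\varphi_e,\varphi_{\bar e})\hookrightarrow\bar\Gamma(\mathcal{X})$) together with the constant homotopy on each $X_v$; because $H_e$ is stationary on $X_e\cup X_{\bar e}$ it agrees there with the constant homotopy on $X_{\partial(e)}$, so these patch to a continuous $H\colon\bar\Gamma(\mathcal{X})\times[0,1]\to\bar\Gamma(\mathcal{X})$ from $\mathrm{id}_{\bar\Gamma(\mathcal{X})}$ to $S\circ R$. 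Hence $R$ is a homotopy equivalence. (In fact $S$ is an embedding and $H$ exhibits a copy of $\Gamma(\mathcal{X})$ as a deformation retract of $\bar\Gamma(\mathcal{X})$.)

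The step I expect to be the main obstacle is the bookkeeping in the first two paragraphs: making sure the edgewise retractions $\rho_e$ and the homotopies $H_e$ can all be normalized to fix the collar ends $X_e,X_{\bar e}$ so that they assemble compatibly across the gluing relation of $\bar\Gamma(\mathcal{X})$. Once that normalization is in place — which is a routine but slightly fiddly feature of the mapping-cylinder collapse — everything else is formal, modulo the standard point-set remark that for a finite graph the closed-cover pasting arguments go through.
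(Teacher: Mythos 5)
Your decomposition and your collapse maps $\rho_e$ are exactly the paper's $h_e\colon C(\varphi_e,\varphi_{\bar e})\to\tilde C(e)$, but the normalization on which you hang the whole argument is false, and it fails precisely at the step you dismissed as ``routine but slightly fiddly.'' In $C(\varphi_e,\varphi_{\bar e})$ the collar block $X_{\partial(e)}\times[\threehalfs,2]$ is attached to the $Y_e$--cylinder along its level--$\threehalfs$ end (via $(y,\onehalf)\sim(\varphi_e(y),\threehalfs)$), and its \emph{free} end is $X_e=X_{\partial(e)}\times\{2\}$. The mapping--cylinder collapse therefore slides the block toward the attached face at level $\threehalfs$, not toward $X_e$; ``sliding toward $X_e$ while fixing the cylinder'' is not even continuous at the junction, and the set $X_e\cup(\text{cylinder})\cup X_{\bar e}$ is not an embedded copy of $D_e$. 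Concretely: if you insist on $\rho_e\iota_e=\mathrm{id}_{D_e}$, then $\iota_e$ is forced to be the canonical inclusion on the open cylinder (there $\rho_e$ is injective), and continuity as $t\to 0$ forces $\iota_e(\varphi_e(y))=(\varphi_e(y),\threehalfs)$; hence on $\overline{\mathrm{im}\,\varphi_e}$ --- which is everything for the surjective circle maps driving Theorem 1 --- $\iota_e$ lands on the attached end, not on the free face. Then $\iota_e\rho_e$ moves $X_e$ (it sends $(x,2)$ to $(x,\threehalfs)$), so no homotopy from $\mathrm{id}$ to $\iota_e\rho_e$ can be stationary on $X_e\cup X_{\bar e}$, and the glued homotopy $H$ on $\bar\Gamma(\mathcal{X})$ is not well defined. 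Worse, your map $S$ itself fails to respect the relation $(y,0)\sim\varphi_e(y)$ of $\Gamma(\mathcal{X})$: the left side goes to $(\varphi_e(y),\threehalfs)$ inside the cylinder piece, while the right side goes to the vertex copy of $X_{\partial(e)}$, which in $\bar\Gamma(\mathcal{X})$ is glued to the free face $X_e$, a different point.

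There are two ways out. You could redefine $\iota_e$ to send the boundary copies of $D_e$ onto the free faces and run an initial segment of the cylinder down through the collar; then $S$ becomes well defined, but $\rho_e\iota_e$ is only homotopic to the identity, so $R\circ S=\mathrm{id}$ ``on the nose'' is lost and you still owe homotopies rel the glued subspaces --- genuinely more work than bookkeeping. The paper instead sidesteps the problem: it uses only the collapse maps $h_e$ (which do restrict to the canonical identification of $X_e$ with $X_{\partial(e)}$), assembles them into a map of the disjoint unions that is the identity on every identified piece, and passes to the quotient to get $\bar H\colon\bar\Gamma(\mathcal{X})\to\Gamma(\mathcal{X})$; the claim that $\bar H$ is a homotopy equivalence is then an instance of the gluing lemma for homotopy equivalences of adjunction spaces (pushouts along cofibrations, with the restricted maps homeomorphisms), not of an explicit strict section --- which, as your attempt shows, cannot be chosen compatibly with the faces. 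So your plan is the paper's in spirit, but the strictness you demand of $\iota_e$ and $H_e$ is unattainable, and as written the proof has a genuine gap.
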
 

\begin{proof}

Let $e \in E$, and consider the two sided mapping cylinder for $\varphi_e, \varphi_{\bar{e}}$
\[ \frac{X_{\partial({e})} \sqcup (Y_e \times [0,1]) \sqcup X_{\partial({\bar{e}})}}{\forall y \in Y_e, (y,0) \sim \varphi_e(y), (y,1) \sim \varphi_{\bar{e}}(y)} \] 

Call this space $\tilde{C}(e)$; consider $X_{\partial(e)}$ and $X_{\partial(\bar{e})}$ as subspaces of $\tilde{C}(e)$. Fixing an orientation $O$ of $G$, note that $\Gamma(\mathcal{X})$ is homeomorphic to the quotient
\[
\frac{(\sqcup_{v \in V} X_v) \sqcup (\sqcup_{e \in O} \tilde{C}(e))}
{X_{\partial(e)} \sim_{id} X_{\partial(e)} \subset \tilde{C}(e) \supset X_{\partial(\bar{e})} \sim_{id} X_{\partial(\bar{e})}}
\]

The two sided cylinder $\tilde{C}(e)$ is a deformation retract of $C(\varphi_e, \varphi_{\bar{e}}).$ Let $h_e \colon C(\varphi_e, \varphi_{\bar{e}}) \to \tilde{C} (e)$ be a deformation retraction which maps $X_e$ identically onto $X_{\partial(e)}$ and which maps $X_{\bar{e}}$ identically onto $X_{\partial(\bar{e})}$ Let $H \colon (\sqcup_{v \in V} X_v) \sqcup (\sqcup_{e \in O} C(\varphi_e,\varphi_{\bar{e}})) \to (\sqcup_{v \in V} X_v) \sqcup (\sqcup_{e \in O} \tilde{C}(e))$ be the map which is the identity on each copy of $X_v$ and $h_e$ on each copy of $C(\varphi_e, \varphi_{\bar{e}})$. $H$ is a deformation retraction since each $h_e$ is a deformation retraction.

Let $p \colon (\sqcup_{v \in V} X_v) \sqcup (\sqcup_{e \in O} C(\varphi_e,\varphi_{\bar{e}})) \to \bar{\Gamma}(\mathcal{X})$ and $q \colon (\sqcup_{v \in V} X_v) \sqcup (\sqcup_{e \in O} \tilde{C}(e)) \to \Gamma(\mathcal{X})$ be the quotient maps. Since $H$ is the identity on all the copies of $X_e$ which are indentified by $p$ and $q$, it respects the quotient, and so there is an induced map $\bar{H} \colon \bar{\Gamma}(\mathcal{X}) \to \Gamma(\mathcal{X})$ completing the diagram below. Since $H$ is a homotopy equivalence, so is $\bar{H}.$  

\begin{center}

\begin{tikzcd}
(\sqcup_{v \in V} X_v) \sqcup (\sqcup_{e \in O} C(\varphi_e,\varphi_{\bar{e}})) \ar[r, "H"] \ar[d, "p"] & (\sqcup_{v \in V} X_v) \sqcup (\sqcup_{e \in O} \tilde{C}(e)) \ar[d, "q"] \\
\bar{\Gamma}(\mathcal{X}) \ar[r,dashed, "\bar{H}"] & \Gamma(\mathcal{X}) \\
\end{tikzcd}

\end{center}

\end{proof}

Having expressed the homotopy type of $\Gamma(\mathcal{X})$ in terms of our mapping cylinders, we can use their assumed geometric structure and warped products to give the quotient a non-postively curved geometry.

\subsection{Multi-warping}

As the complexity of our homotopy type increases, so must the power of our warping. Even if $X_e$ is a scaling of $X_{\partial(e)}$ by a certain constant, that constant may change depending on $e.$ As such, we need to be able to warp in a way which accounts for multiple scaling discrepancies. This leads us to define a multi-dimensional warping.

Let $E$ be a finite set, and let $\lambda \in \reals^E$ be a vector of positive numbers. Given $t \in \reals^E$ define $f_\lambda(t) = \prod_{e \in E} \lambda(e)^{t_e}.$ Let $X$ be a geodesic metric space; define $X \times_\lambda \reals^E = X \times_{f_\lambda (t)} \reals^E$. To replicate our technique from the one dimensional case in this higher dimensional setting, we need to generalize some previous results. In particular, we need the following lemmas.

\begin{lemma}
Let $X$ be a nonpositively curved geodesic metric space; for any positive vector $\lambda \in \reals^E$, the space $X \times_\lambda \reals^E$ is a nonpositively curved geodesic metric space.
\end{lemma}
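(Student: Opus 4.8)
The plan is to reduce to the one–dimensional case, which is Theorem 4, by combining a single linear change of coordinates on $\reals^E$ with a Fubini–type splitting of the warped product. Write $n=|E|$ and identify $\reals^E$ with $\reals^n$. If $\lambda(e)=1$ for all $e$, then $f_\lambda\equiv 1$ and $X\times_\lambda\reals^n$ is the Euclidean product $X\times\reals^n$, which is geodesic by Theorem 3 and nonpositively curved by applying Theorem 4 repeatedly with the constant (hence convex) warping function $1$, each intermediate space being nonpositively curved and therefore complete. So assume $\lambda\not\equiv 1$ and put $\ell=(\ln\lambda(e))_{e\in E}\in\reals^n\setminus\{0\}$, so that $f_\lambda(t)=\exp\langle\ell,t\rangle$.

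Choose an orthogonal transformation $A$ of $\reals^n$ with $A\ell=|\ell|\,\mathbf e_1$, where $\mathbf e_1$ is the first standard basis vector. For any curve $(r,s)$ in $X\times\reals^n$, the curve $(r,As)$ has, term by term, the same warped length: $A$ leaves the $d_X$–contributions unchanged, preserves the $d_{\reals^n}$–contributions because it is a Euclidean isometry, and sends the warping factor $f_\lambda(s(t_i))=\exp(|\ell|\langle\mathbf e_1,As(t_i)\rangle)$ to $\mu^{(As(t_i))_1}$ with $\mu=\exp(|\ell|)>1$. Hence $(x,t)\mapsto(x,At)$ is an isometry from $X\times_\lambda\reals^n$ onto $X\times_g\reals^n$, where $g\colon\reals^n\to(0,\infty)$ depends on the first coordinate only, $g(u)=\mu^{u_1}$. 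It therefore suffices to prove that $X\times_g\reals^n$ is a nonpositively curved geodesic metric space; it is geodesic by Theorem 3.

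The heart of the matter is a Fubini–type splitting: $X\times_g\reals^n$ should be isometric, by rearranging coordinates, to $(X\times_\mu\reals)\times\reals^{n-1}$ with the Euclidean product metric, where the factor $X\times_\mu\reals$ is built on the first coordinate. As sets this is merely a rearrangement; what must be checked is that a curve $c=(r,u_1,u'')$ with $u''\in\reals^{n-1}$ has the same length in the two structures. Since $x\mapsto\sqrt{x^2+a^2}$ is $1$–Lipschitz, the difference of the corresponding partition sums is at most $\sum_i\bigl|\sqrt{g(u_1(t_i))^2 d_X(r(t_{i-1}),r(t_i))^2+|u_1(t_i)-u_1(t_{i-1})|^2}-d_{X\times_\mu\reals}\bigl((r,u_1)(t_{i-1}),(r,u_1)(t_i)\bigr)\bigr|$, and this vanishes under refinement: projection of $X\times_\mu\reals$ to its $\reals$–factor is $1$–Lipschitz, giving a lower bound, while by Lemma 1 a connecting geodesic of $X\times_\mu\reals$ projects to a geodesic of $X$, so, using that $g$ is nearly constant on short first–coordinate intervals (uniform continuity on the compact image of $c$), the warped distance agrees with $\sqrt{g(u_1(t_i))^2 d_X(r(t_{i-1}),r(t_i))^2+|u_1(t_i)-u_1(t_{i-1})|^2}$ up to a factor tending to $1$; one then sums against the partition sums, which are bounded for rectifiable $c$, handling the non-rectifiable case separately. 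Equivalently, both metrics can be exhibited as the intrinsic metric of the one length structure on $X\times\reals^n$ assigning to $c$ the integral of $\sqrt{g(u_1)^2\,\sigma_r^2+|u_1'|^2+|u''|^2}$, where $\sigma_r$ is the metric speed of $r$ in $X$, using the standard formula for the speed of a curve in a warped product.

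Granting the splitting, $X\times_\mu\reals$ is nonpositively curved by Theorem 4, since $t\mapsto\mu^t=\exp(t\ln\mu)$ is convex; it is then complete, so $(X\times_\mu\reals)\times\reals^{n-1}$ is nonpositively curved by iterating Theorem 4 with the convex constant warping function $1$. Transporting back through the isometries above shows that $X\times_\lambda\reals^E$ is a nonpositively curved geodesic metric space. The only genuinely technical point is the Fubini splitting; the change of coordinates is precisely what makes it accessible, as it converts the multi-dimensional warping into a warping by a function of one variable, and all of the curvature input then comes from Theorem 4.
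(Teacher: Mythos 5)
Your argument is correct in outline but takes a genuinely different route from the paper. The paper does not reduce to the one-dimensional case at all: it simply notes that $f_\lambda$ is smooth and convex on $\reals^E$ and asserts that Chen's proof of Theorem 4 goes through with $\reals$ replaced by $\reals^E$ (dropping Chen's smoothing step). You instead keep Theorem 4 as a black box: since $f_\lambda(t)=\exp\langle\ell,t\rangle$ with $\ell=(\ln\lambda(e))_{e\in E}$, an orthogonal change of coordinates taking $\ell$ to $|\ell|\mathbf{e}_1$ preserves the warped length term by term, the warping then depends only on the first coordinate, and the space splits isometrically as $(X\times_\mu\reals)\times\reals^{|E|-1}$ with $\mu=e^{|\ell|}$; nonpositive curvature comes from Theorem 4 applied to the convex function $t\mapsto\mu^t$ and, for the Euclidean factor, to the constant function $1$, while geodesicity is Theorem 3. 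What your route buys is self-containedness relative to the results actually stated in the paper---no need to take on faith that Chen's argument generalizes to warpings over $\reals^E$---and a cleaner structural statement (a Euclidean factor splits off isometrically); what it gives up is generality, since the reduction exploits the exponential form of $f_\lambda$ and would not handle an arbitrary convex warping function on $\reals^E$, which the paper's route in principle does (though only $f_\lambda$ is ever used). Two small points on the splitting step, the only place needing real care: the two-sided comparison between $d_{X\times_\mu\reals}$ and the chordal quantity $\sqrt{g(u_1(t_i))^2 d_X(r(t_{i-1}),r(t_i))^2+|u_1(t_i)-u_1(t_{i-1})|^2}$ follows from exactly the ingredients you name---the $1$-Lipschitz projection to the $\reals$-factor forces short competitor paths to stay where $g$ is nearly constant, giving matching upper and lower bounds up to a factor $1+o(1)$ uniform over the compact image of the curve---but Lemma 1 (geodesics project to geodesics) is neither needed nor quite the right tool there; and the reformulation via the integral of the metric speed should be restricted to rectifiable curves (merely continuous curves need not have a speed), whereas your partition-sum comparison already handles all curves and both finite and infinite lengths, so it should be taken as the actual proof of the splitting.
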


\begin{proof}
This proof is identical to Chen's proof of theorem 4 in \cite{Chen99}. The only important difference is the fact of $f_\lambda$ being a convex function. In their proof, Chen approximates continuous warping functions by a sequence of smooth approximations. We can ignore this step since $f_\lambda$ is smooth, and since its Hessian is a positive matrix, $f_\lambda$ is a convex function on $\reals^E$. As such, we can follow Chen's steps exactly. 
\end{proof}

\begin{lemma}
Let $X$ be a geodesic metric space; let $E$ be a finite set, and let $\lambda \in \reals^E$ be a vector of positive real numbers. Given $e \in E$, let $\delta_e \in \reals^E$ be the vector which is $1$ in the $e$ component and $0$ in all other components. Then the map $(x,t) \mapsto (x,t + \delta_e)$ is an isometry between $\lambda(e) X \times_\lambda \reals^E$ and $X \times_\lambda \reals^E.$
\end{lemma}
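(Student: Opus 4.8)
The plan is to transcribe the proof of Lemma 2 essentially verbatim, the only new algebraic input being the identity $f_\lambda(t + \delta_e) = \lambda(e)\, f_\lambda(t)$ for every $t \in \reals^E$. This holds because $f_\lambda(t + \delta_e) = \prod_{e' \in E} \lambda(e')^{(t + \delta_e)_{e'}} = \lambda(e)^{t_e + 1}\prod_{e' \neq e} \lambda(e')^{t_{e'}} = \lambda(e)\, f_\lambda(t)$, which is exactly the higher-dimensional analogue of the identity $\lambda^{t+1} = \lambda\cdot\lambda^t$ exploited in the one-dimensional case.

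First I would fix a continuous curve $(r,s)\colon [0,1] \to X \times \reals^E$ and write $l_1$ for the warped length in $X \times_\lambda \reals^E$ and $l_2$ for the warped length in $\lambda(e) X \times_\lambda \reals^E$. Reading off $l_2(r,s)$ directly from the definition of warped length, each summand over a partition $\tau\colon 0 = t_0 < \cdots < t_p = 1$ has the form $\sqrt{f_\lambda(s(t_i))^2\,(\lambda(e) d_X)(r(t_{i-1}), r(t_i))^2 + d_{\reals^E}(s(t_{i-1}), s(t_i))^2}$. Pulling the constant $\lambda(e)$ out of $\lambda(e) d_X$ and absorbing it into $f_\lambda(s(t_i))$ via the identity above — and noting that translation by $\delta_e$ is an isometry of Euclidean $\reals^E$, so it leaves the second term unchanged — turns this summand into $\sqrt{f_\lambda(s(t_i) + \delta_e)^2\, d_X(r(t_{i-1}), r(t_i))^2 + d_{\reals^E}(s(t_{i-1}) + \delta_e, s(t_i) + \delta_e)^2}$. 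Passing to the limit over refinements, this is precisely the statement that $l_2(r,s) = l_1(r, s + \delta_e)$.

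Finally, since $(x,t) \mapsto (x, t + \delta_e)$ is a homeomorphism of the underlying product spaces that, by the previous step, sends every curve to a curve of equal warped length, it carries the infimum defining $d_{\lambda(e)X \times_\lambda \reals^E}$ bijectively onto the infimum defining $d_{X \times_\lambda \reals^E}$; as both warped metrics are by construction intrinsic length metrics, the map is an isometry, as in the closing line of Lemma 2's proof.

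I do not expect any genuine obstacle here: the argument is a routine adaptation of Lemma 2, and the only points deserving a word of care are the bookkeeping of the exponent identity for $f_\lambda$ and the remark that the Euclidean metric on $\reals^E$ is translation invariant, so that the $\reals^E$-coordinate contributes identically before and after the shift.
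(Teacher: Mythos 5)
Your proposal is correct and follows essentially the same route as the paper: a direct comparison of warped path lengths under the shift $s \mapsto s + \delta_e$, using the identity $f_\lambda(t+\delta_e) = \lambda(e) f_\lambda(t)$ to absorb the scaling of $d_X$, and then concluding via the fact that a length-preserving homeomorphism between intrinsic (length) metric spaces is an isometry. No gaps to report.
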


\begin{proof}
Let $(r(t),s(t))$ be a path in the toplogical space $X \times \reals^E;$ let $l_1$ be the path length function in $X \times_\lambda \reals^E;$ let $l_2$ be the path length function in $\lambda(e)X \times_\lambda \reals^E.$ Then

\begin{align*}
&l_1(r(t),s(t) + \delta_e)  = \\
&\lim_{\tau \to \infty} \sum_{i = 1}^n \sqrt{(\prod_{\varepsilon \neq e}\lambda(\varepsilon)^{s_\varepsilon(t_i)}))^2(\lambda(e)^{(s_e(t_i) + 1)})^2d_X(r(t_{i-1}),r(t_i))^2 + \sum_{\varepsilon \in E}|s_\varepsilon(t_i) - s_\varepsilon(t_{i-1})|^2} \\
&\lim_{\tau \to \infty} \sum_{i = 1}^n \sqrt{(\prod_{\varepsilon \neq e}\lambda(\varepsilon)^{s_\varepsilon(t_i)}))^2(\lambda(e)^{s_e(t_i)})^2(\lambda(e)^2d_X(r(t_{i-1}),r(t_i))^2 + \sum_{\varepsilon \in E}|s_\varepsilon(t_i) - s_\varepsilon(t_{i-1})|^2} \\
&= \lim_{\tau \to \infty} \sum_{i = 1}^n \sqrt{(\prod_{\varepsilon \in E}\lambda(\varepsilon)^{s_\varepsilon(t_i)}))^2 d_{\lambda(e)X}(r(t_{i-1}),r(t_i))^2 + \sum_{e \in E}|s_e(t_i) - s_e(t_{i-1})|^2 } \\
&= l_2(r(t),s(t)).
\end{align*}

\end{proof}

We now have all the tools in place to prove the theorem 2.

\setcounter{thm}{1}

\begin{thm}
Let $\mathcal{X}$ be a finite graph of spaces such that each $X_v$ and $Y_e$ is a nonpositively curved, geodesic metric space and for each $e \in E$ there is $\lambda_e > 0$ so that $\varphi_e \colon Y_e \to \lambda_e X_{\partial(e)}$ is a nonpositively curved gluing. Then there is a nonpositively curved geodesic metric space $\mathcal{Y}$ which is homotopy equivalent to $\Gamma(\mathcal{X}).$
\end{thm}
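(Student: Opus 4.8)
The plan is to run the Warped Mapping Spiral construction over the entire graph at once, warping this time by $\reals^E$ so that the finitely many scaling discrepancies $\lambda_e$ are all absorbed simultaneously. By the homotopy equivalence $\bar\Gamma(\mathcal{X})\simeq\Gamma(\mathcal{X})$ proved above, it suffices to produce a nonpositively curved geodesic metric space homotopy equivalent to $\bar\Gamma(\mathcal{X})$, and recall that $\bar\Gamma(\mathcal{X})$ is assembled from the vertex spaces $X_v$ and the double mapping cylinders $C(\varphi_e,\varphi_{\bar e})$ (one for each $e\in O$) glued along the ends $X_e$, each of which is a copy of $X_{\partial(e)}$.

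First I would metrize the double cylinders. For $e\in O$ both $\varphi_e$ and $\varphi_{\bar e}$ are nonpositively curved gluings, so fix nonpositively curved metrics with straight collars on $C(\varphi_e)$ and $C(\varphi_{\bar e})$. By Proposition~1 the copies $Y_{\varphi_e}$ and $Y_{\varphi_{\bar e}}$ of $Y_e$ are closed and locally convex, and each carries the metric of $Y_e$ coming from its straight collar, so the identity between them is an isometry; gluing along it via the two-space gluing theorem makes $C(\varphi_e,\varphi_{\bar e})$ nonpositively curved. Since this identification occurs at positive distance from the ends $X_e$ and $X_{\bar e}$, those ends retain their straight collars and remain closed, locally convex subspaces, isometric respectively to $\lambda_e X_{\partial(e)}$ and $\lambda_{\bar e}X_{\partial(\bar e)}$.

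Now let $\lambda\in\reals^E$ be the vector with $\lambda(e)=\lambda_e$ and write $\hat X_v=X_v\times_\lambda\reals^E$ and $\hat C_e=C(\varphi_e,\varphi_{\bar e})\times_\lambda\reals^E$. These are nonpositively curved by the multi-warped curvature lemma, and by the multi-warped analogue of Corollary~1 (which follows from the multi-dimensional version of Chen's projection lemma underlying that curvature lemma) each $X_e\times_\lambda\reals^E$ is a closed, locally convex subspace of $\hat C_{[e]}$. Since $X_e$ is isometric to $\lambda(e)X_{\partial(e)}$, the multi-warped shift lemma provides a bijective isometry $\iota_e\colon X_e\times_\lambda\reals^E\to\hat X_{\partial(e)}$, $(x,t)\mapsto(x,t+\delta_e)$, for every oriented edge $e\in E$. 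I then set
\[
\mathcal{Y} \;=\; \frac{\bigl(\bigsqcup_{v\in V}\hat{X}_v\bigr)\sqcup\bigl(\bigsqcup_{e\in O}\hat{C}_e\bigr)}{z\sim\iota_e(z)\ \text{ for }z\in X_e\times_\lambda\reals^E\text{ and }e\in E}.
\]
To see that $\mathcal{Y}$ is nonpositively curved I would realize this equivalence relation as a finite composition of elementary gluings, each identifying $X_e\times_\lambda\reals^E$ with the current image of $\hat X_{\partial(e)}$ along $\iota_e$: when those two subspaces lie in different connected components of the space built so far, apply the two-space gluing theorem; when they lie in the same component (which happens exactly when $e$ closes a cycle of $G$), apply the one-space, two-disjoint-subspaces gluing theorem. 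Finiteness of $E$ makes this terminate, and as in the Warped Mapping Spiral the resulting space is complete, hence nonpositively curved. The main obstacle is precisely the bookkeeping inside this iteration: at each stage one must check that the locus $X_e\times_\lambda\reals^E$ is still an isometric copy of itself (it is, since $e$ has not yet been processed and gluings at other ends or other vertices do not meet it), that the image of $\hat X_{\partial(e)}$ stays closed, locally convex, complete, and isometric to $\hat X_{\partial(e)}$ — so that $\iota_e$ remains a bijective local isometry — and, in the cycle-closing case, that the two loci are disjoint. These rest on the standard behavior of the Bridson--Haefliger gluings (a subspace glued along stays locally convex and keeps its intrinsic metric) together with the multi-warped analogue of Corollary~1, and verifying them carefully is the bulk of the argument.

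Finally, for the homotopy type I would argue as in the Warped Mapping Spiral construction: the homotopy type of $\mathcal{Y}$ depends only on the homotopy classes of the attaching maps $\iota_e$, and contracting the $\reals^E$ factor shows each $\iota_e$ is homotopic to the map induced by the canonical homeomorphism $X_e\cong X_{\partial(e)}$ and the identity on $\reals^E$. Hence $\mathcal{Y}$ is homotopy equivalent to the space obtained by gluing the $\hat X_v$ and $\hat C_e$ along those maps, namely $\bar\Gamma(\mathcal{X})\times\reals^E$, which deformation retracts onto $\bar\Gamma(\mathcal{X})$. Since $\bar\Gamma(\mathcal{X})\simeq\Gamma(\mathcal{X})$, we conclude $\mathcal{Y}\simeq\Gamma(\mathcal{X})$, as required.
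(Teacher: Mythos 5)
Your proposal is correct and follows essentially the same route as the paper: metrize each double cylinder $C(\varphi_e,\varphi_{\bar e})$ by gluing the two straight-collared cylinders along $Y_{\varphi_e}$, warp everything by $\reals^E$ with the vector $\lambda$, identify $X_{e^*}\times_\lambda\reals^E$ with $X_{\partial(e^*)}\times_\lambda\reals^E$ via the shift isometries $(x,t)\mapsto(x,t+\delta_{e^*})$, and conclude the homotopy type by homotoping those shifts to the identity to recover $\bar{\Gamma}(\mathcal{X})\times_\lambda\reals^E\simeq\Gamma(\mathcal{X})$. Your explicit iteration of the Bridson--Haefliger gluing theorems (two-space versus one-space according to whether an edge closes a cycle) is simply a more detailed justification of the step the paper states in one sentence, so no genuinely different ideas are involved.
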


\begin{proof}
Each $\varphi_e$ is a nonpositively curved gluing, so we can give $C(\varphi_e)$ a nonpositively curved geometry so that $X_e$ is closed, locally convex and isometric to $\lambda_e X_{\partial(e)}.$ Similarly, $Y_{\varphi_e}$ is closed, locally convex and isometric to $Y_e.$  Thus the quotient metric on $C(\varphi_e,\varphi_{\bar{e}})$ is nonpositively curved. In our described geometry $X_e$ is a closed, locally convex subspace of $C(\varphi_e,\varphi_{\bar{e}})$ isometric to $\lambda_e X_{\partial(e)}.$ 

Let $\lambda \in \reals^E$ be the positive vector with $\lambda(e) = \lambda_e.$ Consider the disjoint union $(\sqcup_{v \in V} X_v \times_\lambda \reals^E) \sqcup (\sqcup_{e \in O} C(\varphi_e,\varphi_{\bar{e}}) \times_\lambda \reals^E).$ The space $X_e \times_\lambda \reals^E$ is a locally convex subpsace of $C(\varphi_e,\varphi_{\bar{e}}) \times_\lambda \reals^E.$ As such, the map $X_e \times_\lambda \reals^E \to X_{\partial(e)} \times_\lambda \reals^E, (x, t) \mapsto (x, t + \delta_e)$ is an isometry between closed, locally convex subspaces of this disjoint union.
 
Let $O \subset E$ be an orientation of $G$; the space 
\[ 
\mathcal{Y} \coloneqq 
\frac{(\sqcup_{v \in V} X_v \times_\lambda \reals^E) \sqcup (\sqcup_{e \in O} C(\varphi_e,\varphi_{\bar{e}})\times_\lambda \reals^E)}
{\forall e \in O, e^* = e,\bar{e}; X_{e^*} \times_\lambda \reals^E \ni (x, t) \sim (x, t + \delta_{e^*}) \in X_{\partial(e^*)} \times_\lambda \reals^E} 
\] is a quotient of a  nonpositively curved geodesic metric space by isometries between closed, complete, locally convex subspaces and therefore is a non-postively curved geodesic space. 

The homotopy type of this quotient depends on the homotopy type of the maps $(x, t) \mapsto (x, t + \delta_e).$ These maps are homotopic to the identity on $X_\epsilon \times_\lambda \reals^E.$ Thus $\mathcal{Y}$ is homotopy equivalent to  
\[ 
\frac{(\sqcup_{v \in V} X_v \times_\lambda \reals^E) \sqcup (\sqcup_{e \in O} C(\varphi_e,\varphi_{\bar{e}}) \times_\lambda \reals^E)}
{\forall e \in O, e^* = e,\bar{e}; X_{e^*} \times_\lambda \reals^E \ni (x, t) \sim (x, t) \in X_{\partial(e^*)} \times_\lambda \reals^E} 
\] which is homeomorphic to 
\[ 
\frac{ (\sqcup_{v \in V} X_v) \sqcup (\sqcup_{e \in O} C(\varphi_e,\varphi_{\bar{e}})) }
{X_e \ni x \sim x \in X_{\partial(e)} \text{ and } X_{\bar{e}} \ni x \sim x \in X_{\partial(\bar{e})}} \times_\lambda \reals^E = \bar{\Gamma}( \mathcal{X}) \times_\lambda \reals^E
\] 
which is homotopy equivalent to $\Gamma({\mathcal{X}}) \times_\lambda \reals^E$ and therefore homotopy equivalent to $\Gamma(\mathcal{X}).$
\end{proof}

Putting all our tools together, we can return to our first theorem.

\setcounter{thm}{0}

\begin{thm}
Let $(G = (E,V), \mathcal{G})$ be a finite graph of groups such that each edge and vertex group is an infinite cyclic group. Then $\pi_1(\mathcal{G})$ acts freely, properly, and isometrically on a nonpositively curved geodesic metric space.
\end{thm}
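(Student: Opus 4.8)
The plan is to realize $\mathcal{G}$ as a finite graph of spaces built entirely out of circles and then apply Theorem 2. First I would take, for each vertex $v$, a circle $X_v$ of circumference $1$, and for each edge a circle $Y_e = Y_{\bar e}$ of circumference $1$; each of these is a complete, nonpositively curved geodesic metric space with fundamental group $\ints$. After fixing generators, each edge monomorphism $\phi_e \colon B_e \to A_{\partial(e)}$ is multiplication by a nonzero integer $n_e$, so I would realize $\varphi_e \colon Y_e \to X_{\partial(e)}$ by the standard covering map of circles of degree $n_e$. Then $\pi_1(X_v) = A_v$, $\pi_1(Y_e) = B_e$ and $(\varphi_e)_* = \phi_e$, so by Scott--Wall \cite{ScottWall79} we get $\pi_1(\Gamma(\mathcal{X})) \cong \pi_1(\mathcal{G})$.

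To invoke Theorem 2 I must, for each oriented edge $e$, produce $\lambda_e > 0$ making $\varphi_e \colon Y_e \to \lambda_e X_{\partial(e)}$ a nonpositively curved gluing. I would take $\lambda_e = 1/|n_e|$, so that $\lambda_e X_{\partial(e)}$ is a round circle of circumference $1/|n_e|$ and $\varphi_e$ is an honest $|n_e|$-fold covering of round circles, hence a local isometry. If $|n_e| = 1$ this is a bijective local isometry, so case (1) of Theorem 9 applies. If $|n_e| \geq 2$, then over any point of $\lambda_e X_{\partial(e)}$ the fiber consists of $|n_e|$ points spaced $1/|n_e|$ apart in $Y_e$; choosing $\epsilon < 1/(2|n_e|)$ makes the $\epsilon$-balls about these points pairwise disjoint arcs, each isometric to an interval and hence CAT(0), so case (2) of Theorem 9 applies. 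Either way $\varphi_e$ is a nonpositively curved gluing. This verification — recognizing the covering maps of circles as nonpositively curved gluings and checking the fiber-separation hypothesis of Theorem 9 — is essentially the only place geometry enters, and I expect it to be the main (though mild) obstacle; everything else is bookkeeping or a direct appeal to Theorem 2.

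By Theorem 2 there is a nonpositively curved geodesic metric space $\mathcal{Y}$ homotopy equivalent to $\Gamma(\mathcal{X})$, so $\pi_1(\mathcal{Y}) \cong \pi_1(\Gamma(\mathcal{X})) \cong \pi_1(\mathcal{G})$. Since $\mathcal{Y}$ is locally CAT(0) it is connected (as $G$ is connected) and locally contractible, so it admits a universal cover $\widetilde{\mathcal{Y}}$, which with the induced length metric is, by the Cartan--Hadamard theorem \cite{BridHaef99}, a complete CAT(0) space; in particular $\widetilde{\mathcal{Y}}$ is a nonpositively curved geodesic metric space. The deck transformation group of $\widetilde{\mathcal{Y}} \to \mathcal{Y}$ is $\pi_1(\mathcal{Y}) \cong \pi_1(\mathcal{G})$, and it acts on $\widetilde{\mathcal{Y}}$ freely, properly discontinuously (hence properly), and by isometries, since deck transformations preserve the induced length metric. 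This yields the required free, proper, isometric action of $\pi_1(\mathcal{G})$ on a nonpositively curved geodesic metric space.
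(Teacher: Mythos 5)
Your proposal is correct and follows essentially the same route as the paper: realize each vertex and edge group by a flat circle, observe that the degree-$k_e$ map becomes a locally isometric covering $S^1 \to \tfrac{1}{|k_e|}S^1$ satisfying the covering-map case of the gluing criterion, apply Theorem 2, and pass to the universal cover via Cartan--Hadamard to get the free, proper, isometric action of $\pi_1(\mathcal{G})$. Your explicit check of the fiber-separation hypothesis (taking $\epsilon < 1/(2|n_e|)$) is exactly the point the paper handles, just stated there more briefly.
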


\begin{proof}
Each vertex group $A_v$ and edge group $B_e$ is a copy of $\mathbb{Z},$ and so each monomorphism $\phi_e \colon B_e \to A_{\partial{e}}$ is a map $\mathbb{Z} \mapsto \mathbb{Z}, z \mapsto k_e z$ for some $k_e \in \mathbb{Z};$ $\phi_e$ is realized by the continuous map $\varphi_e \colon S^1 \mapsto S^1, x \mapsto k_e x.$ Giving $S_1$ the flat metric $[0,1] / (0 \sim 1)$ the map $\varphi_e$ is a locally isometric covering map $S^1 \mapsto \frac{1}{|k_e|}S^1;$ the fibers of $\varphi_e$ are discrete covered by disjoint neighborhoods isometric to a flat interval $(- \epsilon, \epsilon).$ By theorem 8, $\varphi_e$ is a nonpositively curved gluing.

Letting $X_v = S^1, Y_e = S^1$ for all $v \in V, e \in E,$ and letting each $\varphi_e$ be as above, we have a graph of spaces $\mathcal{X}$ associated to the graph $G$ such that $\pi_1(X_v) = \pi_1(Y_e) = \mathbb{Z}$ and $(\varphi_e)_* = \phi_e.$ Thus $\pi_1(\mathcal{G}) = \pi_1(\Gamma(\mathcal{X})).$ Since each $X_v$ and $Y_e$ is a nonpositively curved geodesic space, and each $\varphi_e$ is a nonpositively curved gluing, there is a nonpositively curved geodesic metric space $\mathcal{Y}$ homotopy equivalent to $\Gamma(\mathcal{X}).$ Therefore, $\pi_1(\mathcal{G})$ acts freely, properly, and isometrically on the universal cover $\tilde{\mathcal{Y}};$ by the Cartan-Hadamard theorem, $\tilde{\mathcal{Y}}$ is a nonpositively curved geodesic space.
\end{proof}

\end{document}